\begin{document}

\title{A new recursive spectral Tau method on system of generalized Abel-Volterra integral equations
}

\author{S. Shahmorad \and P. Mokhtary \and Y. Talaei  \and A. Faghih
}

\authorrunning{ S. Shahmorad , P. Mokhtary \and Y. Talaei,} 

\institute{ Sedaghat Shahmorad (Corresponding author)\at
             Department of Applied Mathematics, Faculty of Mathematical Science, University of Tabriz, Tabriz, Iran.\\
               \email{shahmorad@tabrizu.ac.ir}
                \and
             Payam Mokhtary \at
             Department of Mathematics, Faculty of Basic Sciences, Sahand University of Technology, Tabriz, Iran.\\
              \email{mokhtary.payam@gmail.com,\; mokhtary@sut.ac.ir}
              \and 
         Younes Talaei \at
              Department of Applied Mathematics, Faculty of Mathematical Science, University of Tabriz, Tabriz, Iran. \\
              \email{y$\_$talaei@tabrizu.ac.ir}           
           \and      
         Amin Faghih \at
               Department of Mathematics, Faculty of Basic Sciences, Sahand University of Technology, Tabriz, Iran. \\
              \email{amin.fagheh71@gmail.com}           
}

\maketitle

\begin{abstract}
This paper provides an efficient recursive approach of the spectral Tau method, to approximate the solution of system of generalized Abel-Volterra integral equations. In this regards, we first investigate the existence, uniqueness as well as smoothness of the solutions under various assumptions on the given data. Next, from a numerical perspective, we express approximated solution as a linear combination of suitable canonical polynomials which are constructed by an easy to use recursive formula. Mostly, the unknown parameters are calculated by solving a low dimensional algebraic systems independent of degree of approximation which prevent from high computational costs. Obviously, due to singular behavior of the exact solution,  using classical polynomials to construct canonical polynomials, leads to low accuracy results. In this regards, we develop a new fractional order canonical polynomials using M\"untz-Legendre polynomials which have a same asymptotic behavior with the solution of underlying problem. The convergence analysis is discussed, and the familiar spectral accuracy is achieved  in $L^{\infty}$-norm. Finally, the reliability of the method is evaluated using various problems.

\keywords{The recursive approach of the Tau method \and System of generalized Abel-Volterra integral equations \and M\"untz-Legendre polynomials \and
Fractional vector canonical polynomials \and convergence analysis.}
\subclass{35R11 \and 47A25   \and 34A25.}
\end{abstract}
\section{Introduction}
In this paper, the following system of generalized Abel-Volterra integral equations is considered:
\begin{equation}\label{1}
y_i(t)=g_i(t)+\sum\limits_{j=1}^{n}{\int\limits_{0}^{t}{(t-s)^{\alpha_{ij}-1}k_{ij}(t,s)y_j(s)ds}},\quad i=1,2,...,n,~~t\in \Omega=[0,1],
\end{equation}
where $k_{ij}(t,s)$, and $g_{i}(t)$ are given continuous functions for $i,j=1,2,...,n$. $y_i(t)$ is unknown and
\begin{equation}\label{129}
\alpha_{ij}=\frac{a_{ij}}{b_{ij}} \in \mathbb{Q}\cap (0,1],\ \ \
gcd(a_{ij},b_{ij})=1,\ \ i,j=1,2,...,n.
\end{equation}

These type of integral equations  arise in many scientific applications such as diffusion problems, spread of epidemics, the behavior of viscoelastic materials
in mechanics (see \cite{Brau,Lighthill,Abel} and references therein). There are several numerical methods for solving Abel-Volterra integral equations system such as waveform
relaxation method \cite{H4}, extrapolation method \cite{Tao}, block by block method \cite{Katani}, reproducing kernel Hilbert space
method \cite{Ha}, parallel algorithm \cite{H6}, Laplace transform method \cite{Shamloo}. However,  few research has been done on the Abel-Volterra integral equations system (\ref{1}).

The recursive approach of Tau method was firstly introduced in 1969 by Ortiz  \cite{Ortiz1} to solve a class of ordinary differential equations, and was extended for solving a system of ODE's in \cite{Freilich,Russo}. This method firstly considered the approximate solution as a linear
combination of some suitable functions called canonical polynomials which are calculated recursively, and secondly transformed
the underlying problem to an equivalent system of algebraic equations. This approach have two main advantages.  First, to produce a more reliable approximate solution,
it is not necessary to repeat whole computation, since in the next step, the already determined canonical polynomials are used again, and second, the dimension of the resulting algebraic system is independent of the degree of approximation and equal to the height of operator.
Clearly, this property can prevent unwanted oscillations in errors that may occur
for large degree of approximations.

In functional equations with smooth solution, the classical orthogonal polynomials such as Jacobi polynomials are used in constructing the canonical polynomials. Since we have weakly singular integral operators, we typically expect that the solutions of \eqref{1} are not smooth, even for smooth input functions. Evidently, this can effect constructing the numerical methods with a reasonable accuracy. This drawback motivates us to design a powerful approach to approximate the non-smooth solutions of \eqref{1}. In this regards, recognizing the smoothness properties of the solutions and designing approximate approach that  produces the numerical solutions with a same asymptotic behavior with the exact ones are essential. In this paper, we first investigate existence, uniqueness and smoothness properties of \eqref{1}. We prove that the first derivatives of the solutions of \eqref{1} suffer from a discontinuity at origin. Consequently, developing the classical approach for the numerical  solution of  \eqref{1} yields a low accurate approximations, and thereby producing high order approximate solutions regardless of singularity behavior of the solutions is crucial and new in the literature. Then we introduce a new vector of fractional canonical polynomials with a same asymptotic behavior with the solutions of \eqref{1} using suitable  M\"untz-Legendre polynomials, and develop a high order recursive Tau approach.

The organization of the paper is as follows: In the next section, we give some required definitions and preliminaries. In Section \ref{sec1}, existence, uniqueness, and smoothness properties of \eqref{1} are investigated. Section  \ref{sec2} devoted to our numerical approach. In this section, we begin by some definitions and theorems which are required in the sequel, and then construct a recursive Tau approach based on newly defined fractional vector canonical polynomials to approximate the solution of \eqref{1}. The convergence analysis of the proposed method is investigated in Section \ref{sec3}. Section \ref{sec4} is devoted  to numerical results to illustrate the effectiveness of the proposed method. Finally, some conclusion is presented in Section \ref{sec5}.
\section{Preliminaries}
In this section, some required definitions and lemmas in the sequel, are presented.
\subsection{Shifted Jacobi polynomials}
The shifted Jacobi polynomials on $\Omega$ denoted by $J_n^{\vartheta,\xi}(s)$ are orthogonal with respect to the shifted Jacobi weight function $w^{\vartheta,\xi}(s)=s^{\xi}(1-s)^{\vartheta}$ with the parameters $\vartheta,\xi>-1$, i.e.,
\begin{equation}\label{erev1}
\int_{\Omega}J_{m}^{\vartheta,\xi}(s)J_{n}^{\vartheta,\xi}(s)w^{\vartheta,\xi}(s)ds=h_{n}^{\vartheta,\xi}\delta_{mn}, \quad  m,n\geq 0,
\end{equation}
in which
\begin{equation*}
h_{n}^{\vartheta,\xi}=\|J_{n}^{\vartheta,\xi}\|_{w^{\vartheta,\xi}}^{2}=\dfrac{\Gamma(n+\vartheta+1)\Gamma(n+\xi+1)}{(2n+\vartheta+\xi+1)n!\Gamma(n+\vartheta+\xi+1)},
\end{equation*}
and $\delta_{mn}$ is the Kronecker delta, and $\|.\|_{\vartheta,\xi}$ is the shifted Jacobi $L^2$-norm defined by
\[\|U\|_{w^{\vartheta,\xi}}^2=(U,U)_{w^{\vartheta,\xi}}:=\int\limits_{\Omega}{U^2(s)w^{\vartheta,\xi}(s)ds}.\]

The shifted Jacobi $L^2$ space on $\Omega$ is defined by
\[
L_{w^{\vartheta,\xi}}^2(\Omega)=\{U:\Omega \to \mathbb{R}, \|U\|_{w^{\vartheta,\xi}} < \infty\}.
\]

For simplicity we use the notations $(.,.), (L^2(\Omega),\|.\|)$, when
$\vartheta=\xi=0$. These polynomials have the following explicit formula
\begin{equation*}
J_{n}^{\vartheta,\xi}(s)=\sum_{j=0}^{n}Z_{j}^{\vartheta,\xi,n}s^{j},
\end{equation*}
where
\begin{equation*}
  Z_{j}^{\vartheta,\xi,n}=\dfrac{(-1)^{n-j}\Gamma(n+\xi+1)\Gamma(n+\vartheta+\xi+j+1)}{\Gamma(\xi+j+1)j!\Gamma(n+\vartheta+\xi+1)(n-j)!}.
\end{equation*}

Further properties of the Jacobi polynomials can be found in \cite{Shens}.
The shifted Jacobi orthogonal projection $\Pi_N^{\vartheta,\xi}:L_{w^{\vartheta,\xi}}^2(\Omega) \to \mathbb{P}_N$ is defined by
\begin{equation}\label{eqq3}
\Pi_N^{\vartheta,\xi}U(s)=\sum_{j=0}^{N}U_j^{\vartheta,\xi} J_j^{\vartheta\xi}(s),
\end{equation}
where $\mathbb{P}_{N}$ is the space of all algebraic polynomials with degree at most $N$, and
\begin{equation*}
U_j^{\vartheta,\xi}=\dfrac{1}{h_{j}^{\vartheta,\xi}} (U, J_j^{\vartheta,\xi})_{w^{\vartheta,\xi}}.
\end{equation*}

The following lemma gives the bound of the truncation error $U-\Pi_N^{\vartheta,\xi}U$ for various values of $\vartheta, \xi$ in the $L_\infty$ norm.
\begin{lemma}\label{lemrev1}
Let $\partial_s^k U \in L_{w^{\vartheta,\xi}}^{2}(\Omega)$ for $k \geq 1$. Then there exists a positive constant $C$ independent of $N$ such that
\begin{equation}\label{erev3}
\|U-\Pi_N^{\vartheta,\xi}U\|_{\infty} \leq C \begin{cases} N^{\frac{3}{4}-k}\|\partial_s^k U\|_{w^{\vartheta,\xi}},\hspace{2.5 cm} -1< \vartheta,\xi \le 0,\\
\\
(1+N^{\rho+\frac{1}{2}})N^{\frac{3}{4}-k}\|\partial_s^kU\|_{w^{\vartheta,\xi}},\hspace{1 cm} \varrho=\max(\vartheta,\xi),~ \text{Else}.
\end{cases}
\end{equation}
\end{lemma}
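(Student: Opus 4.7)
The strategy is to combine the standard weighted $L^2$ truncation estimate for the shifted Jacobi projection with sharp pointwise asymptotics of the Jacobi polynomials, and then split according to the two ranges of $(\vartheta,\xi)$.

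First, I would expand the error as a Jacobi tail series,
\[
U(s)-\Pi_N^{\vartheta,\xi}U(s)=\sum_{n>N}U_n^{\vartheta,\xi}J_n^{\vartheta,\xi}(s),\qquad
U_n^{\vartheta,\xi}=\frac{1}{h_n^{\vartheta,\xi}}\bigl(U,J_n^{\vartheta,\xi}\bigr)_{w^{\vartheta,\xi}},
\]
and estimate the coefficients by $k$ successive integrations by parts, i.e.\ by applying the Rodrigues formula that relates $J_n^{\vartheta,\xi}$ to derivatives of $s^{n+\xi}(1-s)^{n+\vartheta}$. Each step gives up one derivative from $U$ and shifts both weight parameters by $+1$, so that the coefficients can be rewritten as a $(\vartheta+k,\xi+k)$-weighted inner product involving $\partial_s^{k}U$. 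Bessel's inequality then yields the global decay estimate
\[
\sum_{n>N}n^{2k}\bigl(U_n^{\vartheta,\xi}\bigr)^{2}h_n^{\vartheta,\xi}\le C\,\|\partial_s^{k}U\|_{w^{\vartheta,\xi}}^{2}.
\]

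Next, I would invoke the classical pointwise bounds collected in \cite{Shens}: uniformly on $\Omega$,
\[
\bigl|J_n^{\vartheta,\xi}(s)\bigr|\le C\,n^{\max\{\vartheta,\xi,-1/2\}},
\]
together with the stronger interior bound $|J_n^{\vartheta,\xi}(s)|=O(n^{-1/2})$ away from the endpoints, the peak growth $n^{\max(\vartheta,\xi)}$ being confined to a transition layer of width $O(n^{-2})$ about $s=0,1$. It is this two-scale behaviour that produces the exponent $3/4$ in the final bound.

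Finally, applying Cauchy-Schwarz to the tail sum,
\[
\|U-\Pi_N^{\vartheta,\xi}U\|_\infty\le\sum_{n>N}|U_n^{\vartheta,\xi}|\,\|J_n^{\vartheta,\xi}\|_\infty\le\Bigl(\sum_{n>N}n^{2k}(U_n^{\vartheta,\xi})^{2}h_n^{\vartheta,\xi}\Bigr)^{\!1/2}\!\Bigl(\sum_{n>N}\frac{\|J_n^{\vartheta,\xi}\|_\infty^{2}}{n^{2k}h_n^{\vartheta,\xi}}\Bigr)^{\!1/2},
\]
and using $h_n^{\vartheta,\xi}\sim n^{-1}$ to evaluate the second factor, I would split into two cases. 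When $-1<\vartheta,\xi\le 0$ only the interior $n^{-1/2}$ contribution survives, and balancing it against the transition layer through the two-scale bound gives the rate $N^{3/4-k}\|\partial_s^{k}U\|_{w^{\vartheta,\xi}}$. When instead $\varrho=\max(\vartheta,\xi)>0$ the endpoint layer contributes the extra factor $N^{\varrho+1/2}$, producing the ``Else'' branch $(1+N^{\varrho+1/2})N^{3/4-k}\|\partial_s^{k}U\|_{w^{\vartheta,\xi}}$.

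The hard part will be the second step: obtaining the sharp sup-norm control of $J_n^{\vartheta,\xi}$ together with the companion two-scale estimate that separates the oscillatory interior from the endpoint layer. This is a nontrivial piece of orthogonal polynomial asymptotics (essentially a Szeg\H{o}/Bernstein-type analysis), for which I would appeal directly to the estimates collected in \cite{Shens} rather than reproving them here; the remaining work is then a clean bookkeeping of powers of $N$.
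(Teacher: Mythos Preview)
Your route is genuinely different from the paper's, and considerably more laborious. The paper does not attempt a direct tail-series estimate at all. For the range $-1<\vartheta,\xi\le 0$ it simply quotes the bound from \cite{rrev1}. For the ``Else'' case it uses a one-line bootstrap: since $\Pi_N^{\vartheta,\xi}$ reproduces polynomials of degree at most $N$,
\[
U-\Pi_N^{\vartheta,\xi}U=(I-\Pi_N^{\vartheta,\xi})\bigl(U-\Pi_N^{-1/2,-1/2}U\bigr),
\]
so $\|U-\Pi_N^{\vartheta,\xi}U\|_\infty\le(1+\|\Pi_N^{\vartheta,\xi}\|_\infty)\,\|U-\Pi_N^{-1/2,-1/2}U\|_\infty$. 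Invoking the Lebesgue-constant estimate $\|\Pi_N^{\vartheta,\xi}\|_\infty=O(N^{\varrho+1/2})$ from \cite{mokhs3} together with the already-cited first case at $(\vartheta,\xi)=(-\tfrac12,-\tfrac12)$ immediately produces the extra factor $1+N^{\varrho+1/2}$. What this buys is modularity: all the hard Jacobi asymptotics are encapsulated in two citations, and the general-parameter case is reduced to the Chebyshev case by a triangle inequality rather than by redoing the asymptotics.

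Your direct approach is more self-contained in spirit, but the step you flag as ``hard'' is genuinely incomplete as written. The Cauchy--Schwarz inequality you display inserts the \emph{termwise} supremum $\|J_n^{\vartheta,\xi}\|_\infty$, and with the global bound $\|J_n^{\vartheta,\xi}\|_\infty\le Cn^{\max(\vartheta,\xi,-1/2)}$ and $h_n^{\vartheta,\xi}\sim n^{-1}$ this yields only $N^{1-k}$ in, e.g., the Legendre case $\vartheta=\xi=0$, not $N^{3/4-k}$. Recovering the exponent $3/4$ requires bounding $\bigl|\sum_{n>N}U_n^{\vartheta,\xi}J_n^{\vartheta,\xi}(s)\bigr|$ pointwise in $s$ with the local estimate $|J_n^{\vartheta,\xi}(s)|\lesssim n^{-1/2}(s(1-s)+n^{-2})^{-1/4}$ \emph{before} taking the supremum over $s$; the two-scale heuristic you describe points in that direction but does not survive the termwise sup you actually wrote down. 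Either carry out that pointwise-in-$s$ argument explicitly, or adopt the paper's comparison trick, which sidesteps the issue entirely.
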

\begin{proof}
The first inequality is given in \cite{rrev1}. To prove the other ones, we can write
\begin{equation*}
\begin{split}
\|U-\Pi_{N}^{\vartheta,\xi}U\|_{\infty}&=\|\big(U-\Pi_{N}^{-\frac{1}{2},-\frac{1}{2}}U\big)-\Pi_{N}^{\vartheta,\xi}\Big(U-\Pi_{N}^{-\frac{1}{2},-\frac{1}{2}}U\Big)\|_{\infty}\\
&\leq (1+\|\Pi_{N}^{\vartheta,\xi}\|_{\infty})\|U-\Pi_{N}^{-\frac{1}{2},-\frac{1}{2}}U\|_\infty.
\end{split}
\end{equation*}

From \cite{mokhs3}, we have
\begin{equation}\label{erev2}
\|\Pi_{N}^{\vartheta,\xi}\|_{\infty}=\mathcal{O}\left(N^{\varrho+\frac{1}{2}}\right),\quad \varrho=\max(\vartheta,\xi).
\end{equation}

Thus, the desired inequality can be obtained by applying \eqref{erev2} and the first inequality of \eqref{erev3}.
\end{proof}
\subsection{M\"untz-Legendre polynomials}
Setting $\mathrm{V}=\{v_i:=i \sigma,~~\sigma \in (0,1)\}_{i=0}^{\infty}$, the M\"{u}ntz space
associated with $\mathrm{V}$ is defined by
\[\mathrm{M}_{N,\sigma}(\mathrm{V})=\text{Span}\{
1, t^{\sigma},..., t^{N \sigma}\},\quad
\mathrm{M}(\mathrm{V})=\bigcup\limits_{N=0}^{\infty}
\mathrm{M}_{N,\sigma}(\mathrm{V}).\]

The M\"{u}ntz-Jacobi polynomial with index $(0,\frac{1}{\sigma}-1)$ is
defined by
\begin{equation*}
L_{i,\sigma}(t):=\mathrm{J}^{0,\frac{1}{\sigma}-1}_{i}(t^{\sigma}) \in
\mathrm{M}_{i \sigma,\sigma}(\mathrm{V}) ,~~~\forall~ i \in \mathbb{N}_0,~ t \in \Omega.
\end{equation*}

In view of \eqref{erev1}, it can be easily checked that these functions are mutually orthogonal, i.e.,
\begin{equation}\label{erev4}
\begin{split}
\int_{\Omega}L_{m,\sigma}(t)L_{n,\sigma}(t)dt=\frac{1}{\sigma}
h_{n}^{0,\frac{1}{\sigma}-1}\delta_{mn}, \quad  m,n\geq 0.
\end{split}
\end{equation}

These polynomials satisfy the following recurrence relation \cite{Calcolo}
\[
\left\{\begin{array}{l}
L_{0,\sigma}(t)=1,\ \ \ L_{1,\sigma}(t)=\frac{t^\sigma(1+\sigma)-1}{\sigma},\\
\\
d_{1,i}L_{i+1,\sigma}(t)=d_{2,i}(t)L_{i,\sigma}(t)-d_{3,i}L_{i-1,\sigma}(t),\ \ \ i=1,2,...,
\end{array}\right.
\]
where
\[
\left\{\begin{array}{l}
d_{1,i}=2(i+1)(i+\frac{1}{\sigma})(2i+\frac{1}{\sigma}-1), \\
\\
d_{2,i}(t)=(2i+\frac{1}{\sigma})\Big((2i+\frac{1}{\sigma}-1)(2i+\frac{1}{\sigma}+1)t-(\frac{1}{\sigma}-1)^{2}\Big), \\
\\
d_{3,i}=2i(i+\frac{1}{\sigma}-1)(2i+\frac{1}{\sigma}+1).
\end{array}\right.
\]

The M\"{u}ntz-Jacobi polynomials $\{L_{i,\sigma}(t)\}_{i \ge 0}$ are
mutually orthogonal and form a complete orthogonal system in
$L^2(\Lambda)$ \cite{mokhs,Shenm}. That is, for any $u(t)\in L^2(\Omega)$, we have the
following unique expansion
\begin{equation*}\label{eqrev4}
u(t)=\sum\limits_{i=0}^{\infty}{u_i
L_{i,\sigma}(t)},\quad
u_i=\frac{(u,L_{i,\sigma})}{\|L_{i,\sigma}\|^2}.
\end{equation*}

Therefore, we have
\begin{equation*}\label{eqrev5}
\mathrm{M}_{N,\sigma}(\mathrm{V})=\text{Span}\big\{L_{0,\sigma}(t),
L_{1,\sigma}(t),...,L_{N,\sigma}(t)\big\}.\end{equation*}

The M\"untz-Jacobi orthogonal projection $\Pi_{N,\sigma}:L^2(\Omega) \to \mathrm{M}_{N,\sigma}(\mathrm{V})$, is defined by
$$\Pi_{N,\sigma} u(t)=\sum\limits_{i=0}^{N}{u_i
L_{i,\sigma}(t)},$$
which has the following property
\begin{equation}\label{eqrev6}
\big(\Pi_{N,\sigma} u-u,v \big)=0,~~\forall v \in
\mathrm{M}_{N,\sigma}(\mathrm{V}).
\end{equation}

In the sequel, we investigate the approximation properties of the M\"untz-Jacobi orthogonal projection $\Pi_{N,\sigma}$ by providing some estimations for the truncation error in $L_\infty$ and $L^2$ norms. Assume that the functions $u(t)$ and $U(s)$ are related by the coordinate transformation $s=t^\sigma$. Thus, their derivatives are connected as follows
\begin{eqnarray*}
D_t u:=\partial_s U(s)&=&\partial_t u~ \partial_s t,\\
D_t^2 u:=\partial_s^2 U(s)&=&\partial_t(D_t u)~\partial_s t,\\
\vdots
\\
D_t^n u:=\partial_s^n U(s)&=&\partial_t(\partial_s t~ \partial_t(\cdots(\partial_t u~ \partial_s t)\cdots))\partial_s t,
\end{eqnarray*}
and the following relations hold
\begin{eqnarray}\label{erev7}
\nonumber &&\|U(s)\|_{w^{0,\frac{1}{\sigma}-1}}^2=\int_{\Omega}{|U(s)|^2w^{0,\frac{1}{\sigma}-1}(s)ds}=
\sigma \int_{\Omega}{|u(t)|^2dt}=\sigma \|u(t)\|^2,\\
\nonumber \\
\nonumber &&\|\partial_s^k U(s)\|_{w^{0,\frac{1}{\sigma}-1}}^2=\int_{\Omega}{|\partial_s^k U(s)|^2w^{0,\frac{1}{\sigma}-1}(s)ds}=
\sigma \int_{\Omega}{|D_t^k u(t)|^2 dt}=\sigma \|D_t^k u(t)\|^2.
\end{eqnarray}

In the following lemmas we present the truncation error $\Pi_{N,\sigma} u - u$ in the $L^\infty$ and $L^2$-norms.
\begin{lemma}\label{lemrev3}
Let $D_t^k u\in L^2(\Omega),~k \geq 1$. Then there exists a positive constant $C$ independent of $N$ such that
\begin{equation}\label{erev8}
\|u-\Pi_{N,\sigma}u\|_{\infty} \leq C (1+N^{\frac{1}{\sigma}-\frac{1}{2}})N^{\frac{3}{4}-k}\|D_t^k u\|,
\end{equation}
\end{lemma}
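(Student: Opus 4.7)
The plan is to reduce the Müntz-Legendre projection error in $t$ to a shifted Jacobi projection error in $s$ via the coordinate change $s = t^\sigma$, and then invoke Lemma \ref{lemrev1} with the appropriate parameters.

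First I would exploit the definition $L_{i,\sigma}(t) = J_i^{0,\frac{1}{\sigma}-1}(t^\sigma)$ together with the transformation $s = t^\sigma$. Setting $u(t) = U(t^\sigma) = U(s)$, the Müntz orthogonality relation \eqref{erev4} is just the Jacobi orthogonality \eqref{erev1} for parameters $(\vartheta,\xi) = (0,\tfrac{1}{\sigma}-1)$ pulled back through the change of variables; consequently the two projections satisfy
\begin{equation*}
(\Pi_{N,\sigma} u)(t) = (\Pi_N^{0,\frac{1}{\sigma}-1} U)(t^\sigma), \qquad t \in \Omega.
\end{equation*}
This can be seen either from the uniqueness of the orthogonal projection onto $\mathrm{M}_{N,\sigma}(\mathrm{V})=\mathrm{Span}\{L_{0,\sigma},\dots,L_{N,\sigma}\}$ characterized by \eqref{eqrev6}, or by matching coefficients directly.

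Next, since $t \mapsto t^\sigma$ is a continuous bijection of $\Omega=[0,1]$ onto itself, the sup-norms coincide:
\begin{equation*}
\|u - \Pi_{N,\sigma} u\|_\infty = \|U - \Pi_N^{0,\frac{1}{\sigma}-1} U\|_\infty.
\end{equation*}
Because $\sigma \in (0,1)$ we have $\xi = \tfrac{1}{\sigma}-1 > 0$, so the second branch of Lemma \ref{lemrev1} applies with $\varrho = \max(0,\tfrac{1}{\sigma}-1) = \tfrac{1}{\sigma}-1$. This yields
\begin{equation*}
\|U - \Pi_N^{0,\frac{1}{\sigma}-1} U\|_\infty \leq C \bigl(1+N^{\frac{1}{\sigma}-\frac{1}{2}}\bigr) N^{\frac{3}{4}-k} \|\partial_s^k U\|_{w^{0,\frac{1}{\sigma}-1}}.
\end{equation*}
Finally, using the weighted-norm identity $\|\partial_s^k U\|_{w^{0,\frac{1}{\sigma}-1}}^2 = \sigma\|D_t^k u\|^2$ recorded in \eqref{erev7} and absorbing $\sqrt{\sigma}$ into $C$ gives the claim \eqref{erev8}.

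The main obstacle is actually only a bookkeeping one, namely making sure that the Müntz projection in the $t$-variable and the Jacobi projection in the $s$-variable really coincide under $s=t^\sigma$; once that identification is in place, Lemma \ref{lemrev1} and the norm transformation rule \eqref{erev7} do the rest. One must also check that the required regularity $\partial_s^k U \in L^2_{w^{0,\frac{1}{\sigma}-1}}(\Omega)$ is equivalent, via \eqref{erev7}, to the hypothesis $D_t^k u \in L^2(\Omega)$, which is immediate from the definition of $D_t^k$.
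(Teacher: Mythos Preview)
Your proof is correct and follows essentially the same route as the paper: identify $\Pi_{N,\sigma}u$ with $\Pi_N^{0,\frac{1}{\sigma}-1}U$ under the change of variables $s=t^\sigma$ to obtain $\|u-\Pi_{N,\sigma}u\|_\infty=\|U-\Pi_N^{0,\frac{1}{\sigma}-1}U\|_\infty$, then apply the second branch of Lemma~\ref{lemrev1} with $\varrho=\tfrac{1}{\sigma}-1$ and convert the weighted norm back via \eqref{erev7}. Your write-up is in fact more careful than the paper's in justifying why the two projections coincide and why the sup-norms match.
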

\begin{proof}
Using the variable transformation $s=t^\sigma$, the following relation is obtained
\begin{equation}\label{erev10}
\|u-\Pi_{N,\sigma}u\|_\infty=\|U-\Pi_N^{0,\frac{1}{\sigma}-1}U\|_\infty,
\end{equation}

Since $\frac{1}{\sigma}-1>0$, the desired inequality is obtained using Lemma \ref{lemrev1} for $\vartheta, \xi >0$, and the relation \eqref{erev7}.
\end{proof}
\begin{lemma}\cite{mokhs2,Shenm} \label{lemrev4}
Assume that $D_t^k u \in L_{\tilde{w}^{k,k}}^{2}(\Omega)$ with $\tilde{w}^{k,k}(t)=w^{k,k}(t^\sigma)$, and $k \geq 1$. Then there exists a positive constant $C$ independent of $N$ such that
\begin{equation}\label{erev9}
\|u-\Pi_{N,\sigma}u\| \leq C  N^{-k}\|D_t^k u\|_{\tilde{w}^{k,k}}.
\end{equation}
\end{lemma}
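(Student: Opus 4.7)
The plan is to reduce the estimate to a known weighted $L^2$ approximation result for the classical shifted Jacobi projection $\Pi_{N}^{0,\frac{1}{\sigma}-1}$, via the coordinate transformation $s=t^{\sigma}$ that already underlies the definition $L_{i,\sigma}(t)=J_{i}^{0,\frac{1}{\sigma}-1}(t^{\sigma})$. The key point is that this transformation intertwines the M\"untz–Jacobi projection with the shifted Jacobi projection, i.e., if $u(t)=U(t^{\sigma})$ then $\Pi_{N,\sigma}u(t)=\bigl(\Pi_{N}^{0,\frac{1}{\sigma}-1}U\bigr)(t^{\sigma})$. This follows directly from the orthogonality characterization \eqref{eqrev6}, since any $v\in \mathrm{M}_{N,\sigma}(\mathrm{V})$ is of the form $v(t)=V(t^{\sigma})$ for some $V\in\mathbb{P}_{N}$, and a change of variables in \eqref{eqrev6} reproduces the defining orthogonality of $\Pi_{N}^{0,\frac{1}{\sigma}-1}U$ in $L_{w^{0,\frac{1}{\sigma}-1}}^{2}(\Omega)$.

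First, I would use the first relation in \eqref{erev7} applied to the difference $u-\Pi_{N,\sigma}u$ to obtain
\begin{equation*}
\|u-\Pi_{N,\sigma}u\|^{2}=\frac{1}{\sigma}\,\bigl\|U-\Pi_{N}^{0,\frac{1}{\sigma}-1}U\bigr\|_{w^{0,\frac{1}{\sigma}-1}}^{2}.
\end{equation*}
Next, I would invoke the standard shifted Jacobi approximation estimate (see, e.g., \cite{Shens})
\begin{equation*}
\bigl\|U-\Pi_{N}^{\vartheta,\xi}U\bigr\|_{w^{\vartheta,\xi}}\leq C\,N^{-k}\,\bigl\|\partial_{s}^{k}U\bigr\|_{w^{\vartheta+k,\xi+k}},
\end{equation*}
specialized to $\vartheta=0$ and $\xi=\frac{1}{\sigma}-1$, so that the right hand side involves $\|\partial_{s}^{k}U\|_{w^{k,\frac{1}{\sigma}-1+k}}$.

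Finally, I would translate the weighted $L^{2}$ norm of $\partial_{s}^{k}U$ back to the $t$-variable using the substitution $s=t^{\sigma}$, $ds=\sigma t^{\sigma-1}dt$, and the chain-rule identities for $D_{t}^{k}u$ above \eqref{erev7}. A short computation gives
\begin{equation*}
\bigl\|\partial_{s}^{k}U\bigr\|_{w^{k,\frac{1}{\sigma}-1+k}}^{2}=\sigma\int_{\Omega}|D_{t}^{k}u(t)|^{2}\,t^{k\sigma}(1-t^{\sigma})^{k}\,dt=\sigma\,\bigl\|D_{t}^{k}u\bigr\|_{\tilde{w}^{k,k}}^{2},
\end{equation*}
which matches exactly the weight $\tilde{w}^{k,k}(t)=w^{k,k}(t^{\sigma})=t^{k\sigma}(1-t^{\sigma})^{k}$ appearing in the statement. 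Combining these three steps and absorbing the $\sigma$-factors into $C$ yields \eqref{erev9}.

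The only delicate point is the initial identification $\Pi_{N,\sigma}u(t)=\bigl(\Pi_{N}^{0,\frac{1}{\sigma}-1}U\bigr)(t^{\sigma})$; once this is in hand, the proof is a routine change of variables combined with a classical Jacobi estimate. I would not expect any real obstacle, since the regularity hypothesis $D_{t}^{k}u\in L_{\tilde{w}^{k,k}}^{2}(\Omega)$ has been designed precisely so that it pulls back under $s=t^{\sigma}$ to the hypothesis $\partial_{s}^{k}U\in L_{w^{k,\frac{1}{\sigma}-1+k}}^{2}(\Omega)$ required by the standard Jacobi bound, with no additional boundary or integrability issues.
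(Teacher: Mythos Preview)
The paper does not supply its own proof of this lemma; it simply cites \cite{mokhs2,Shenm}. Your argument is correct and is precisely the standard route taken in those references: transfer the M\"untz--Jacobi projection to the shifted Jacobi projection via $s=t^{\sigma}$ (as the paper already does in \eqref{erev10} for the $L^{\infty}$ case), apply the classical Jacobi $L^{2}$ estimate with indices $(0,\tfrac{1}{\sigma}-1)$, and pull the resulting weighted norm of $\partial_{s}^{k}U$ back to the $t$-variable to recover $\|D_{t}^{k}u\|_{\tilde{w}^{k,k}}$. The weight computation you give is accurate, and the identification $\Pi_{N,\sigma}u(t)=\bigl(\Pi_{N}^{0,\frac{1}{\sigma}-1}U\bigr)(t^{\sigma})$ is indeed the only point requiring a word of justification, which you handle correctly via \eqref{eqrev6}.
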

\section{Existence, Uniqueness and Smoothness Results}\label{sec1}
It can be shown that the equation \eqref{1} is equivalent to the following
system of  equations
\begin{eqnarray}\label{1rev}
\nonumber Y(t)&=&G(t)+\int\limits_{0}^{t}{{K}(t,s)Y(s)ds}\\
&=&G(t)+\int\limits_{0}^{t}{(t-s)^{\alpha-1}\tilde{K}(t,s)Y(s)ds},
\end{eqnarray}
where $\alpha=\min\limits_{1 \le i,j \le n}{\{\alpha_{ij}\}}$, and
\begin{eqnarray*}
Y(t)&=&[y_{1}(t),y_{2}(t),...,y_{n}(t)]^{T},~~
G(t)=[g_{1}(t),g_{2}(t),...,g_{n}(t)]^{T},
\\
\\
{K}(t,s)&=&{\left[
  \begin{array}{ccc}
    (t-s)^{\alpha_{11}-1}k_{11}(t,s)&\ldots &(t-s)^{\alpha_{1n}-1}k_{1n}(t,s) \\
     (t-s)^{\alpha_{21}-1}k_{21}(t,s)&\ldots &(t-s)^{\alpha_{2n}-1}k_{2n}(t,s) \\
    \vdots & \vdots & \vdots \\
      (t-s)^{\alpha_{n1}-1}k_{n1}(t,s) &\ldots&   (t-s)^{\alpha_{nn}-1}k_{nn}(t,s) \\
  \end{array}
\right]}_{n\times n},\\
\\
\\
\tilde{K}(t,s)&=&(t-s)^{1-\alpha} K(t,s).
\end{eqnarray*}

Clearly, the vector-valued form of Theorem 4.8 in  \cite{linz}, concludes the following result regarding the existence and uniqueness of  solution for Eq. \eqref{1rev}.
\begin{theorem}[Existence and uniqueness of the solutions] \label{theo1}
Assume that the functions $k_{ij}(t,s)$ and $g_i(t)$, for $i,j=1,2,...,n$, are continuous on $D=\{(t,s): 0 \le s \le t \le 1\}$ and $\Omega$ respectively. Then the equation \eqref{1rev} has a unique continuous solution $Y(t)$ on $\Omega$.
\end{theorem}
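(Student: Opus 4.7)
The plan is to implement the standard Banach fixed-point scheme on the vector-valued space $C(\Omega,\mathbb{R}^n)$ equipped with the componentwise sup-norm $\|Y\|=\max_{1\le i\le n}\sup_{t\in\Omega}|y_i(t)|$. The idea is to lift the scalar weakly-singular Volterra argument (Linz, Theorem 4.8) to systems; the only new ingredients are the replacement of scalar absolute values by vector and matrix norms, and the verification that the recast kernel $\tilde K(t,s)$ is harmless.

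First I would set the stage. Define the operator
\begin{equation*}
(\mathcal T Y)(t):=G(t)+\int_0^t(t-s)^{\alpha-1}\tilde K(t,s)\,Y(s)\,ds,
\end{equation*}
and verify that $\mathcal T$ maps $C(\Omega,\mathbb{R}^n)$ into itself. The key observation is that each entry of $\tilde K$ is $\tilde k_{ij}(t,s)=(t-s)^{\alpha_{ij}-\alpha}k_{ij}(t,s)$ with $\alpha_{ij}-\alpha\ge 0$ by the choice of $\alpha$, hence continuous on the compact triangle $D$, so uniformly bounded by some $M$. Continuity of $\mathcal T Y$ on $\Omega$ then follows from standard dominated-convergence estimates exploiting integrability of $(t-s)^{\alpha-1}$ for $\alpha>0$.

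Next I would establish an iterated Lipschitz bound
\begin{equation*}
\|\mathcal T^m Y_1(t)-\mathcal T^m Y_2(t)\|\le \frac{\bigl(nM\,\Gamma(\alpha)\bigr)^m}{\Gamma(m\alpha+1)}\,t^{m\alpha}\,\|Y_1-Y_2\|,
\end{equation*}
by induction on $m$. The base case $m=1$ is the direct estimate $\|\mathcal T Y_1(t)-\mathcal T Y_2(t)\|\le (nM/\alpha)t^{\alpha}\|Y_1-Y_2\|$, and the induction step relies on the Beta-function identity $\int_0^t(t-s)^{\alpha-1}s^{m\alpha}ds=B(\alpha,m\alpha+1)\,t^{(m+1)\alpha}$ together with $B(\alpha,m\alpha+1)=\Gamma(\alpha)\Gamma(m\alpha+1)/\Gamma((m+1)\alpha+1)$, which is exactly what produces the Mittag-Leffler-type denominator. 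Since $t\in[0,1]$ and Stirling gives super-exponential growth of $\Gamma(m\alpha+1)$, the prefactor tends to $0$, so $\mathcal T^{m_0}$ is a strict contraction for some $m_0$. The generalized Banach fixed-point theorem then yields a unique $Y\in C(\Omega,\mathbb{R}^n)$ with $Y=\mathcal T Y$, equivalently a unique continuous solution of \eqref{1rev}.

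The main obstacle is that the weak singularity $(t-s)^{\alpha-1}$ blocks a one-shot contraction: the naive Lipschitz constant $nM/\alpha$ need not be below $1$. The Beta/Gamma iteration trick sketched above is precisely what resolves this and constitutes the vectorial analogue of the scalar argument in \cite{linz}; as an alternative one could use a Bielecki-weighted norm $\|Y\|_\lambda:=\sup_{t\in\Omega}e^{-\lambda t}\|Y(t)\|$ and show $\|\mathcal T Y_1-\mathcal T Y_2\|_\lambda\le nM\,\Gamma(\alpha)\,\lambda^{-\alpha}\|Y_1-Y_2\|_\lambda$, which is a contraction for $\lambda$ sufficiently large. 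Either route produces the claimed unique continuous solution.
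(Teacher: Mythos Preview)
Your proposal is correct and is precisely the approach the paper has in mind: the paper does not actually supply its own proof but simply invokes ``the vector-valued form of Theorem~4.8 in~\cite{linz}'', and what you have written is exactly that vector-valued extension---the iterated contraction/Beta-function estimate (or, equivalently, the Bielecki-norm variant) applied componentwise on $C(\Omega,\mathbb{R}^n)$. In other words, you have filled in the details the paper omits by citation, with no substantive divergence in method.
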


To ensure the smoothness properties of \eqref{1}, we give the following theorem regarding the degree of regularity for the solution of the equation \eqref{1}
\begin{theorem}[Smoothness of the solutions]\label{th202} Under the assumptions of Theorem \ref{theo1}, assume that we can write
\begin{eqnarray}\label{eqrev3}
\nonumber && g_i(t)=\bar{g}_i(t^{1/\gamma}), \\
&& k_{ij}(t,s)=\bar{k}_{ij}(t^{1/\gamma},s^{1/\gamma}),\quad  i,j=1,2,...,n,
\end{eqnarray}
where $\gamma$ indicating the least common multiple of $\{b_{ij}\}_{i,j=1}^n$.  The functions $\bar{g}_i(t)$, and $\bar{k}_{ij}(t,s)$ are analytic in a neighborhood of zero and $(0,0)$, respectively. Then the series representation of the solution $y_i(t)$ of the equation \eqref{1} in a neighborhood of the origin is given by
\begin{equation}\label{eq3}
y_i(t)=\sum\limits_{\mu=0}^{\infty}{\bar{y}_{i,\mu}~t^{\frac{\mu}{\gamma}}},\quad i=1,2,...,n,
\end{equation}
where $\bar{y}_{i,\mu}$ are known coefficients.
\end{theorem}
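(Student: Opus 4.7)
The plan is to linearise the fractional exponents by the change of variable $\tau = t^{1/\gamma}$, recast \eqref{1} as an integral equation for $\bar{y}_i(\tau):=y_i(\tau^{\gamma})$ whose data are analytic at the origin, and then construct $\bar{y}_i$ as an ordinary power series in $\tau$. Substituting $\tau = t^{1/\gamma}$ back into this expansion immediately yields \eqref{eq3} with $\bar{y}_{i,\mu}$ equal to the Taylor coefficients of $\bar{y}_i$ at zero.

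Concretely, in each integral term of \eqref{1} I would first perform the inner substitution $s=rt$, $r\in[0,1]$, which extracts the factor $t^{\alpha_{ij}}=\tau^{m_{ij}}$ where $m_{ij}:=\gamma\,\alpha_{ij}\in\{1,\dots,\gamma\}$ is a positive integer (because $b_{ij}\mid\gamma$), and then the further substitution $u=r^{1/\gamma}$, which rewrites the arguments as $\bar{k}_{ij}(\tau,u\tau)$ and $\bar{y}_j(u\tau)$. Using the factorisation $1-u^{\gamma}=(1-u)(1+u+\cdots+u^{\gamma-1})$, the system becomes
\begin{equation*}
\bar{y}_i(\tau)=\bar{g}_i(\tau)+\sum_{j=1}^{n}\tau^{m_{ij}}\int_{0}^{1}w_{ij}(u)\,\bar{k}_{ij}(\tau,u\tau)\,\bar{y}_j(u\tau)\,du,
\end{equation*}
where $w_{ij}(u):=\gamma(1-u)^{\alpha_{ij}-1}u^{\gamma-1}\bigl(1+u+\cdots+u^{\gamma-1}\bigr)^{\alpha_{ij}-1}$ is a non-negative $L^{1}[0,1]$ weight (the bracketed polynomial being bounded away from $0$ on $[0,1]$), and $\bar{g}_i$, $\bar{k}_{ij}$ are analytic at the origin by hypothesis \eqref{eqrev3}.

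Next I would insert the ansatz $\bar{y}_i(\tau)=\sum_{\mu\ge 0}c_{i,\mu}\tau^{\mu}$ into the transformed equation, expand $\bar{g}_i$ and $\bar{k}_{ij}$ in their Taylor series around the origin, and equate coefficients of $\tau^{\mu}$. Because every integral contribution carries the factor $\tau^{m_{ij}}$ with $m_{ij}\ge 1$, the recursion expresses $c_{i,\mu}$ entirely in terms of coefficients $c_{j,\nu}$ with $\nu\le\mu-1$ and the bounded moments $M_{ij,n}:=\int_{0}^{1}w_{ij}(u)u^{n}\,du$, which are controlled by a standard Beta-function estimate. Hence $\{c_{i,\mu}\}$ is uniquely determined term by term from the data.

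The main obstacle is to show that this formal series actually converges in a neighbourhood of the origin; this is where I expect the technical work to lie. The plan is a majorant (Cauchy-type) argument: analyticity of $\bar{g}_i$ and $\bar{k}_{ij}$ furnishes constants $C,\rho>0$ with $|g_{i,\mu}|\le C\rho^{-\mu}$ and $|\kappa_{ij,p,q}|\le C\rho^{-(p+q)}$, and combined with the uniform boundedness of the moments $M_{ij,n}$ the recursion can be dominated by a geometric series. An induction on $\mu$ (possibly after shrinking $\rho$ to some $\rho'<\rho$) yields $|c_{i,\mu}|\le C'(\rho')^{-\mu}$, proving that $\sum c_{i,\mu}\tau^{\mu}$ converges on a disk around $\tau=0$. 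Its analytic sum solves the transformed equation, hence by Theorem \ref{theo1} coincides with $\bar{y}_i$; setting $t=\tau^{\gamma}$ produces \eqref{eq3} with $\bar{y}_{i,\mu}=c_{i,\mu}$.
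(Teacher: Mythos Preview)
Your proposal is correct and follows the same high-level strategy as the paper (build a formal fractional power series solution via a recursion on coefficients, then prove convergence by a majorant argument), but the execution differs in two respects worth noting. First, you linearise the exponents by the substitution $\tau=t^{1/\gamma}$, $u=r^{1/\gamma}$ and work with an ordinary power series in $\tau$ and the $L^{1}$ weights $w_{ij}$; the paper instead substitutes the fractional series directly into \eqref{1} and computes each term via the Beta integral $\int_{0}^{t}(t-s)^{\alpha_{ij}-1}s^{(\mu_{2}+\mu)/\gamma}\,ds=\beta(\alpha_{ij},(\mu_{2}+\mu)/\gamma+1)\,t^{(\mu_{2}+\mu)/\gamma+\alpha_{ij}}$, arriving at an equivalent recursion for $\bar{y}_{i,\mu}$. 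Your moments $M_{ij,n}$ are exactly these Beta values after the change of variable, so the two recursions coincide. Second, for convergence you outline a Cauchy-type coefficient estimate $|c_{i,\mu}|\le C'(\rho')^{-\mu}$ by induction on $\mu$; the paper instead invokes Lindel\"of's majorant method, replacing all data by their absolute-value majorants and showing by induction on $L$ that the partial sums $S_{i,L}(t)$ of the majorant series remain uniformly bounded on a small interval $[0,\varepsilon_i]$. Your route is arguably cleaner once the change of variable is in place (no fractional exponents to track), while the paper's Lindel\"of argument avoids the somewhat delicate bookkeeping needed to close your geometric induction over the triple sum in $p,q,\nu$; both are standard and either completes the proof.
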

\begin{proof}
First, we intend to obtain the unknown coefficients $\{\bar{y}_{i,\mu}\}_{i=1}^{n}$  such  that the representation \eqref{eq3} converges and solves \eqref{1}. Due to \eqref{eqrev3}, we have
\begin{eqnarray}\label{e54-11}
\nonumber &&g_i(t)=\sum\limits_{\mu=0}^{\infty}{\bar{g}_{i,\mu}~t^{\frac{\mu}{\gamma}}}, \\
&&k_{ij}(t,s)=\sum\limits_{\mu_{1}, \mu_2=0}^{\infty}{\bar{k}_{ij,\mu_{1},\mu_{2}}~t^{\frac{\mu_{1}}{\gamma}}s^{\frac{\mu_{2}}{\gamma}}},\quad i,j=1,2,...,n,
\end{eqnarray}

Meanwhile, by considering uniform convergence, inserting the relations \eqref{eq3} and \eqref{e54-11} into \eqref{1} and some simple manipulations, the coefficients $\{\bar{y}_{i,\mu}\}_{i=1}^{n}$ satisfy the following equality
\begin{equation}\label{eq148}
\sum\limits_{\mu=0}^{\infty}{\bar{y}_{i,\mu}~t^{\frac{\mu}{\gamma}}}=\sum\limits_{\mu=0}^{\infty}{\bar{g}_{i,\mu}~t^{\frac{\mu}{\gamma}}}+\sum\limits_{j=1}^{n}
~\sum\limits_{\mu_{1},\mu_{2}=0}^{\infty}~\sum\limits_{\mu=0}^{\infty}{\bar{k}_{ij,\mu_{1},\mu_{2}}~ \bar{y}_{j,\mu}~~\varrho_{ij}~~t^{\frac{\mu_{1}+\mu_{2}+\mu}
{\gamma}+\alpha_{ij}}},
\end{equation}
in which $\varrho_{ij}=\beta(\alpha_{ij},\frac{\mu_{2}+\mu}{\gamma}+1)$, and $\beta(..)$ denotes the well known Beta function. Substituting $\mu=\mu-\mu_{1} -\mu_{2}-\alpha_{ij} \gamma$ in the second series of the right-hand side of \eqref{eq148} yields
\begin{equation}\label{eq149}
\sum\limits_{\mu=0}^{\infty}{\bar{y}_{i,\mu}~t^{\frac{\mu}{\gamma}}}=\sum\limits_{\mu=0}^{\infty}{\bar{g}_{i,\mu}~t^{\frac{\mu}{\gamma}}}+\sum\limits_{j=1}^{n}~
\sum\limits_{\mu_{1},\mu_{2}=0}^{\infty}~\sum\limits_{\mu=\mu_{1}+\mu_{2}+\alpha_{ij} \gamma}^{\infty}{\bar{k}_{ij,\mu_{1},\mu_{2}}~~\bar{y}_{j,\mu-\mu_{1} -\mu_{2}-\alpha_{ij} \gamma}~~ \bar{\varrho}_{ij} ~t^{\frac{\mu}{\gamma}}},
\end{equation}
where $\bar{\varrho}_{ij}=\beta(\alpha_{ij},\frac{\mu-\mu_{1}}{\gamma}-\alpha_{ij}+1)$. In this step, we compare the coefficients of $t^{\frac{\mu}{\gamma}}$ on both sides of \eqref{eq149} and evaluate the unknown coefficients $\bar{y}_{i,\mu}$. Evidently, for $\mu < \min\limits_{1 \leq l \leq n}\{\alpha_{il}\gamma\}$, we have $\bar{y}_{i,\mu}=\bar{g}_{i,\mu}$, and for $\mu \geq \min\limits_{1 \leq l \leq n}\{\alpha_{il}\gamma\}$, the following recursive relation is derived
\[
\bar{y}_{i,\mu}=\bar{g}_{i,\mu}+\sum\limits_{j=1}^{n}~\sum\limits_{\mu_{1},\mu_{2}=0}^{\infty}{\bar{k}_{ij,\mu_{1},\mu_{2}}~~\bar{y}_{j,\mu-\mu_{1} -\mu_{2}-\alpha_{ij} \gamma}~~ \bar{\varrho}_{ij} },
\]
such that the coefficients with negative indices are considered as zero. This deduces that the series representation \eqref{eq3} solves the main problem \eqref{1} uniquely.

Now, we should prove that the series \eqref{eq3} converges uniformly and absolutely in a neighborhood of origin. To this end, a generalization of Lindelof's majorant method \cite{43,39} is utilized. Let us consider the following system of generalized Abel-Volterra integral equations
\[
Y_i(t)=G_i(t)+\sum\limits_{j=1}^{n}{\int_{0}^{t}(t-s)^{\alpha_{ij}-1}K_{ij}(t,s)Y_{j}(s)ds},~~i=1,2,\ldots,n,
\]
where $G_i(t)$, and $K_{ij}(t,s)$ are majorant of $g_i(t)$ and $k_{ij}(t,s)$, respectively which are obtained by taking absolute values of the coefficients in \eqref{e54-11}. Clearly, $\{Y_i(t)\}_{i=1}^n$ is a majorant for $\{y_i(t)\}_{i=1}^n$, and all of it's coefficients $\bar{Y}_{i,\mu}$ are positive. The formal solution $\{Y_i(t)\}_{i=1}^n$ can be evaluated in exactly the same way as the previous step. Now, we show that the series $Y_i(t)$ is absolutely convergent on $[0,\varepsilon_i]$, with some $\varepsilon_i>0$ which is given in the sequel. To do this, it suffices to prove that the finite partial sum of $Y_i(t)$ is uniformly bounded over $[0,\varepsilon_i]$. Let
\[
S_{i,L+1}(t)=\sum\limits_{\mu=0}^{L+1}{\bar{Y}_{i,\mu}~t^{\frac{\mu}{\gamma}}},
\]
be the finite partial sum of $Y_i(t)$ for $i =1,2,\ldots,n$. Evidently, the following inequality holds
\begin{equation*}
S_{i,L+1}(t) \le G_i(t)+\sum\limits_{j=1}^{n}{\int_{0}^{t}(t-s)^{\alpha_{ij}-1}K_{ij}(t,s)S_{j,L}(s)ds},~~i=1,2,\ldots,n,
\end{equation*}
due to the recursive calculation of the coefficients. Indeed, if we expand the right-hand side of the above inequality, all coefficients $\bar{Y}_{i,\mu}$ with $\frac{\mu}{\gamma}\le \frac{(L+1)}{\gamma}$ are eliminated from both sides, while there will be some additional positive terms of higher-order in the right-hand side.

Considering
\begin{eqnarray*}
D_{i}^{1}&=&\max\limits_{t\in \Omega}(G_{i}(t)),\\
D_{i}^{2}&=&\frac{2}{\alpha} \sum_{j=1}^{n}{D_j^1 \max\limits_{(t,s) \in \Omega \times \Omega}{(K_{ij}(t,s))}},~~i=1,2,\ldots,n,
\end{eqnarray*}
we define
$$\varepsilon_i=\min\bigg\{1,\bigg[\frac{D_{i}^{1}}{D_{i}^{2}}\bigg]^{\frac{1}{\alpha}}\bigg\},~~i=1,2,\ldots,n.$$

Now, we intend to prove that $$|S_{i,L}(t)| \le 2 D_{i}^{1},~~t \in [0,\varepsilon_i]~~i=1,2,\ldots,n.$$

To this end, we apply the principle of mathematical induction on $L$. For $L=0$, it is evident since
\[
S_{i,0}(t)=|\bar{g}_{i,0}| \leq D_{i}^{1},~~i=1,2,\ldots,n.
\]

We will show that if the statement holds up for $L$, then it  also holds up for $L+1$. We have
\begin{eqnarray*}
|S_{i,L+1}(t)|&=&S_{i,L+1}(t) \le G_i(t)+\sum\limits_{j=1}^{n}{\int_{0}^{t}(t-s)^{\alpha_{ij}-1}K_{ij}(t,s)S_{j,L}(s)ds} \\
&=& G_i(t)+\int_{0}^{t}(t-s)^{{\alpha}-1}\bigg(\sum\limits_{j=1}^{n}{(t-s)^{\alpha_{ij}-{\alpha}}K_{ij}(t,s)S_{j,L}(s)\bigg)ds}\\
  &\le & \max\limits_{t\in [0,\varepsilon_{i}]}(G_{i}(t))+\max_{s \in [0,t]}{\bigg(\sum_{j=1}^{n}(t-s)^{\alpha_{ij}-{\alpha}}K_{ij}(t,s)S_{j,L}(s)\bigg)} \frac{t^{{\alpha}}}{{\alpha}}\\
  &\le &D_{i}^{1}+2 \max_{(t,s)\in [0,\varepsilon_{i}] \times [0,\varepsilon_{i}] }{\bigg(\sum_{j=1}^{N}(t-s)^{\alpha_{ij}-{\alpha}}K_{ij}(t,s)D_{j}^1\bigg)} \frac{\varepsilon_{i}^{{\alpha}}}{{\alpha}}\\
  &\le & D_{i}^{1}+\varepsilon_{i}^{{\alpha}} D_{i}^{2}\le 2 D_{i}^{1},~~i =1,2,\ldots,n,
\end{eqnarray*}
which deduces the uniform boundedness of $S_{i,L+1}(t)$ over interval $[0, \varepsilon_{i}]$. In view of the positivity of all it's coefficients, it is also monotone. Hence, since $Y_i(t)$ has a power series structure, it is absolutely and uniformly convergent on $[0,\varepsilon_{i}]$, and the compact subsets of $[0,\varepsilon_{i})$, respectively. Finally, using Lindelof's theorem, it ultimately yields the same properties for series representation of m$y_i(t)$. Thereby, the interchange of integration and series was done properly.
\end{proof}

From Theorem \ref{th202}, we can conclude that the first derivative of $y_{i}(x)$ often has a discontinuity at the origin. This difficulty affects accuracy when the classical  spectral methods are used to approximate the solutions of \eqref{1}. In this regard, we intend to introduce a new recursive Tau method that produces reliable approximations for the solutions of \eqref{1} regardless of their non-smooth behaviors. The remainder of the paper is devoted to establish a new high accurate numerical approach to approximate the solutions of \eqref{1}.
\section{Numerical Approach}\label{sec2}
\subsection{Review of the recursive Tau method}
In this section, we present a brief review for the implementation process of the recursive Tau approach for the numerical solution of the following integer order differential equation
\begin{equation}\label{ODE}
\left\{
  \begin{array}{ll}
\mathcal{L}u(t)=\displaystyle\sum_{i=0}^{l}p_{i}(t)u^{(i)}(t)=f(t),\ \ \ \ t\in \Omega,\\
B_{j}(u)=d_{j},\ \ \ \  j=1,...,l,
  \end{array}
\right.
\end{equation}
where $B_{j},\ j=1,...,l$ are linear functionals and $p_{i}(t)$ and $f(t)$ are polynomials.

 Lanczos proposed the idea of Tau method in \cite{Lanczos2}, to find an exact  polynomial solution $u_{N}(t)$ by adding a perturbation term $H_{N}(t):=\sum_{i=N-l+1}^{N+\hbar}\tau_{i}v_{i}(t)$ to (\ref{ODE}), i.e.,
 \begin{equation*}\label{ODEh}
 \left\{
  \begin{array}{ll}
\mathcal{L}u_{N}(t)=f(t)+H_{N}(t),\ \ \ \ t\in \Omega,\\
B_{j}(u_{N})=d_{j},\ \ \ \  j=1,...,l,
  \end{array}
\right.
\end{equation*}
where $N$ is the degree of approximation, $\hbar:=\max \lbrace deg(\mathcal{L} t^i)-i;\ i\geq 0\rbrace $ and $v_{i}(t)=\sum\limits_{j=0}^{i}c^{(i)}_{j}t^{j}$ are orthogonal basis polynomials. The parameters $\tau_{i},\ i=0,...,N+\hbar$ are unknown parameters. To explain the recursive Tau method, Ortiz \cite{Ortiz1} defined the set of canonical polynomials $Q_{i}(t)$ as follows
\begin{equation*}\label{xanon}
\mathcal{L}Q_{i}(t)=t^{i}+R_{i}(t),\ \ \  i \in \mathbb{N}_{0}-S,
\end{equation*}
 where $S$ is the set of all $i$ for which $t^{i}$ is inaccessible through applying  $\mathcal{L}$ to a polynomial and $R_{i}(t) \in \text{Span} \lbrace t^{i}: i\in S \rbrace$ are the residual polynomials of the canonical polynomials $Q_{i}(t)$. Let $ f(t)=\sum\limits_{i=0}^{F}f_{i}t^{i}$ and the set $ \lbrace E_{1}(t), E_{2}(t),..., E_{\varpi}(t)\rbrace$  is a polynomial kernel basis for the operator $\mathcal{L}$. Then,
\begin{equation*}\label{tttt}
u_{N}(t)=\sum\limits_{\underset{i\notin S}{i=0}}^{F}f_{i}Q_{i}(t)+\sum\limits_{i=N-l+1}^{N+\hbar}\tau_{i}\left(\sum\limits_{\underset{j\notin S}{j=0}}^{i}c^{(i)}_{j}Q_{j}(t)\right)+\sum\limits_{i=1}^{\varpi}\tau_{N+\hbar+i}d_{i}(t)
\end{equation*}
is called recursive Tau-approximate solution for (\ref{ODE}). The  $\hbar+l+\varpi $ unknown
$\tau$-parameters are chosen by $\hbar+\varpi$ conditions  imposed by  the recursive Tau method and $l$ supplementary conditions of (\ref{ODE}).
\subsection{New fractional vector canonical polynomials}
As we proved in the previous section, the first derivative of the solutions of \eqref{1} typically has a discontinuity at origin. This weakness makes it impossible to implement the classical approach of the recursive Tau method to obtain a suitable accurate approximation. To fix this problem, in this section, we introduce and develop a new recursive Tau approach by producing new fractional vector canonical polynomials and providing a high order approximate solution. In this regards, we set $\delta_{ij}=\gamma\alpha_{ij}$, $i,j=1,2,...,n$ and $\sigma:=\frac{1}{\gamma}$, and define
\begin{equation}\label{014}
LY(t):=Y(t)-\int_{0}^{t}K(t,s)Y(s)ds.
\end{equation}
Clearly, from \eqref{1rev} we have
\begin{equation}\label{014rev}
 L Y(t)=G(t).
\end{equation}
\begin{definition}\label{D1}
A vector polynomial ${\bf{Q}}_{i}^{j}(t)$ is called fractional vector canonical polynomial of $L$, if
\[
L({\bf{Q}}_{i}^{j}(t))=t^{j\sigma}{\bf{e}}_{i}+{\bf{R}}_{i}^{j}(t), \ \ \ j\in {\mathbb{N}}_{0}-S_{i}, \ \ i=1,...,n,
\]
where ${\bf{R}}_{i}^{j}(t)$ is called vector residual polynomial of ${\bf{Q}}_{i}^{j}(t)$. When $j \in S_{i}$, we say that ${\bf{Q}}_{i}^{j}(t)$  does  not exist.
Note that,  ${\bf{e}}_{i}=[0,0,\cdots,1,\cdots,0]^{T}$ are unit vectors in $\mathbb{R}^{n}$ for $i=1,...,n$.
\end{definition}
\begin{definition}\label{D2}
The vector space of polynomials
\[
\mathcal{R}(t)=\text{Span} \left \{ \left[
  \begin{array}{c}
    t^{j\sigma} \\
    0\\
    \vdots\\
    0 \\
  \end{array}
\right]
,\left[
  \begin{array}{c}
  0\\
    t^{j\sigma} \\
    \vdots\\
    0 \\
  \end{array}
\right],...,\left[
  \begin{array}{c}
  0\\
\vdots\\
    0\\
       t^{j\sigma} \ \\
  \end{array}
\right];\ j\in \bigcup\limits_{i=1}^n S_{i} \right \}.
\]
 is called the residual space of $L$.
\end{definition}

 \begin{theorem}\label{Int}
Assume that $\underline{\bf{X}}^{\sigma}_{t}=[1,t^{\sigma},t^{2\sigma},...]^{T}$ and $k_{ij}(t,s)\simeq \sum\limits_{p,q=0}^{N}\widehat{k}^{ij}_{p,q}t^{p
\sigma}s^{q \sigma}$. Then,
\[
\int\limits_{0}^{t}(t-s)^{\alpha_{ij}-1}k_{ij}(t,s)s^{r\sigma}ds \simeq {\bf{e}}^{T}_{r+1}\ \Lambda_{ij} \
\underline{\bf{X}}^{\sigma}_{t},\ \ \ \ r=0,1,2,..., \ \ i,j=1,...,n,
\]
where $ \Lambda_{ij}$ is an infinite matrix defined as
 \begin{equation}\label{fd3}
\Lambda_{ij}=\left[
  \begin{array}{ccccccccccc}
    \overbrace{0 \cdots 0}^{\delta_{ij}} &  \widetilde{k}_{0,0}^{ij} & \widetilde{k}_{1,0}^{ij} & \cdots &  \widetilde{k}_{2N,0}^{ij}&0&\cdots &\cdots & \cdots\\
     \vdots &  0 & \widetilde{k}_{0,1}^{ij} & \widetilde{k}_{1,1}^{ij}&\cdots &  \widetilde{k}_{2N,1}^{ij}&0& \cdots &\cdots\\
      \vdots &\vdots &0& \widetilde{k}_{0,2}^{ij} & \widetilde{k}_{1,2}^{ij}&\cdots &  \widetilde{k}_{2N,2}^{ij}&0 &\cdots \\
        \vdots &\vdots &\vdots & \vdots & \vdots & \vdots & \ddots &\ddots &\ddots & \ddots &\vdots\\
  \end{array}
\right]
\end{equation}
and
\begin{equation}\label{fd2}
 \widetilde{k}_{v,l}^{ij}=\sum_{\underset{(p+q=v)}{0 \leq p,q \leq N}}\overline{k}^{ij}_{p,q,l}; \ \ \  \ \ \ \overline{k}^{ij}_{p,q,l}=\widehat{k}^{ij}_{p,q}\beta\left(\delta_{ij}\sigma, (q+l)\sigma +1\right),~~v, l \ge 0.
\end{equation}
\end{theorem}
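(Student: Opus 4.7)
The plan is to substitute the given polynomial approximation of $k_{ij}(t,s)$ directly into the integral, evaluate each monomial integral via a Beta function identity, and then reorganize the resulting double sum as a matrix-vector product with $\Lambda_{ij}$.

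First I would insert the expansion $k_{ij}(t,s)\simeq\sum_{p,q=0}^{N}\widehat{k}^{ij}_{p,q}\,t^{p\sigma}s^{q\sigma}$ to obtain
\[
\int_{0}^{t}(t-s)^{\alpha_{ij}-1}k_{ij}(t,s)s^{r\sigma}ds
\simeq \sum_{p,q=0}^{N}\widehat{k}^{ij}_{p,q}\, t^{p\sigma}\!\int_{0}^{t}(t-s)^{\alpha_{ij}-1} s^{(q+r)\sigma}ds.
\]
Then the substitution $s=tu$ together with the identity $\int_0^1(1-u)^{a-1}u^{b}du=\beta(a,b+1)$, combined with $\alpha_{ij}=\delta_{ij}\sigma$, gives
\[
\int_{0}^{t}(t-s)^{\alpha_{ij}-1} s^{(q+r)\sigma}ds = \beta\!\left(\delta_{ij}\sigma,(q+r)\sigma+1\right) t^{(\delta_{ij}+q+r)\sigma}.
\]
Inserting this back and using the notation $\overline{k}^{ij}_{p,q,r}$ from \eqref{fd2}, the integral becomes $\sum_{p,q=0}^{N}\overline{k}^{ij}_{p,q,r}\, t^{(p+q+\delta_{ij}+r)\sigma}$.

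Next I would collapse the double sum to a single sum indexed by the total power $v=p+q$. Grouping all pairs $(p,q)$ with a fixed sum $v$ yields
\[
\int_{0}^{t}(t-s)^{\alpha_{ij}-1}k_{ij}(t,s)s^{r\sigma}ds \simeq \sum_{v=0}^{2N}\widetilde{k}^{ij}_{v,r}\, t^{(v+\delta_{ij}+r)\sigma},
\]
with $\widetilde{k}^{ij}_{v,r}$ exactly as defined in \eqref{fd2}. The final step is purely a bookkeeping identification: setting $k=v+\delta_{ij}+r$ shows that each nonzero coefficient appears at column index $k+1$ in row $r+1$ of $\Lambda_{ij}$, which is precisely the shifted-band structure displayed in \eqref{fd3} (row $r+1$ begins with $\delta_{ij}+r$ zeros, followed by the entries $\widetilde{k}^{ij}_{0,r},\ldots,\widetilde{k}^{ij}_{2N,r}$, followed by zeros). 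Matching with $\underline{\mathbf{X}}^{\sigma}_{t}=[1,t^{\sigma},t^{2\sigma},\ldots]^{T}$ then gives $\mathbf{e}^{T}_{r+1}\,\Lambda_{ij}\,\underline{\mathbf{X}}^{\sigma}_{t}$.

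The main obstacle is not analytical but purely combinatorial: the reindexing that converts the double sum over $(p,q)$ into the correctly shifted band of $\Lambda_{ij}$ requires careful tracking of three indices $(p,q,r)$ versus the two row/column indices of the infinite matrix. In particular, one must verify that the number of leading zeros in row $r+1$ is exactly $\delta_{ij}+r$ (accounting both for the multiplicative factor $t^{\delta_{ij}\sigma}$ coming from the Beta integral and for the extra $t^{r\sigma}$ from the input monomial), and that the support width $2N+1$ matches the range of $v=p+q$. Once this indexing is checked, the claim follows directly.
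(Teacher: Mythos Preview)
Your proof is correct and follows essentially the same route as the paper: substitute the polynomial expansion of $k_{ij}$, evaluate the resulting monomial integrals via the Beta function, and identify the sum as the $(r+1)$-th row of $\Lambda_{ij}$ acting on $\underline{\mathbf{X}}^{\sigma}_{t}$. The only cosmetic difference is that the paper pulls out the factor $\mathbf{e}^{T}_{r+1}$ at the outset via the identity $s^{r\sigma}=\mathbf{e}^{T}_{r+1}\underline{\mathbf{X}}^{\sigma}_{s}$ and computes the whole vector of row-sums at once, whereas you fix $r$ first and match the row at the end.
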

\begin{proof}
From $s^{r\sigma}={\bf{e}}^{T}_{r+1}\underline{\bf{X}}^{\sigma}_{s}$, we have
\begin{eqnarray}
\int\limits_{0}^{t}(t-s)^{\alpha_{ij}-1}k_{ij}(t,s)s^{r\sigma}ds &\simeq & {\bf{e}}^{T}_{r+1}\sum_{p,q=0}^{N}\widehat{k}^{ij}_{p,q}t^{p\sigma}\int\limits_{0}^{t}(t-s)^{\delta_{ij}\sigma -1}s^
{q\sigma }\  \underline{\bf{X}}^{\sigma}_{s} ds\nonumber\\
&=&{\bf{e}}^{T}_{r+1}\left[
  \begin{array}{c}
   \displaystyle\sum_{p,q=0}^{N}\widehat{k}^{ij}_{p,q}\beta\left(\delta_{ij}\sigma, q\sigma +1\right)t^{(p+q+\delta_{ij})\sigma}\\
    \displaystyle\sum_{p,q=0}^{N}\widehat{k}^{ij}_{p,q}\beta\left(\delta_{ij}\sigma, (q+1)\sigma +1\right)t^{(p+q+\delta_{ij}+1)\sigma} \\
    \vdots \\
  \end{array}
\right]\nonumber\\
&=&{\bf{e}}^{T}_{r+1}\left[
  \begin{array}{c}
  \displaystyle\sum_{p,q=0}^{N}\overline{k}^{ij}_{p,q,0}\ t^{(p+q+\delta_{ij})\sigma}\\
   \displaystyle \sum_{p,q=0}^{N}\overline{k}^{ij}_{p,q,1}\ t^{(p+q+\delta_{ij}+1)\sigma} \\
    \vdots \\
  \end{array}
\right]\nonumber\\
&=&{\bf{e}}^{T}_{r+1}\ \Lambda_{ij} \ \underline{\bf{X}}^{\sigma}_{t},\nonumber
\end{eqnarray}
which completes the proof.
\end{proof}
\begin{definition}
The height vector of $L$ is defined by
\begin{equation}\label{12345}
\mathbf{h}=[h_{1},h_{2},...,h_{n}],\ \ \
h_{i}=\underset{1\leq j\leq n}{max}\lbrace h_{ij} \rbrace, \ \ \ i=1,...,n,\
\end{equation}
where $h_{ij}$  is the number of non-zero diagonals above the main diagonal of $\Lambda_{ij}$.
\end{definition}
Now, we provide a recursive algorithm in order to generate  the vector polynomials ${\bf{Q}}_{j}^{i}(t)$ and their associated residuals ${\bf{R}}_{j}^{i}(t)$:
\begin{theorem}\label{newcan}
Let $\triangle_{j}:=\underset{1\leq i \leq n}{min}\lbrace h_{i}-h_{ij} \rbrace,\ j=1,...,n$. Then,
\begin{equation}\label{12341}
\left\{
  \begin{array}{ll}
&{\bf{Q}}_{j}^{h_{j}+r}(t)=\displaystyle\sum_{i=1}^{n}d_{ij}\left(t^{(r+\triangle_{i})\sigma}{\bf{e}}_{i}-
\sum_{v=1}^{n} \displaystyle\sum_{l=1}^{r+h_{v}}\widetilde{\Lambda}_{v,i}(r+\triangle_{i}+1,l){\bf{Q}}_{v}^{l-1}(t)
\right),\\
&{\bf{R}}_{j}^{h_{j}+r}(t)=-\displaystyle\sum_{i=1}^{n}d_{ij}\left(\sum_{v=1}^{n} \displaystyle\sum_{l=1}^{r+h_{v}}\widetilde{\Lambda}_{v,i}(r+\triangle_{i}+1,l)
{\bf{R}}_{v}^{l-1}(t)\right),
\end{array}
\right.
\end{equation}
 for $j=1,...,n$, $r=0,1,2,...$,  where $d_{ij}$ denote the elements of matrix $D:=P_{r}^{-1}$ in which
\[P_{r}:=\left[
  \begin{array}{ccc}
    \widetilde{\Lambda}_{1,1}(r+\triangle_{1}+1,r+h_{1}+1) &\cdots & \widetilde{\Lambda}_{1,n}(r+\triangle_{n}+1,r+h_{1}+1) \\
      \widetilde{\Lambda}_{2,1}(r+\triangle_{1}+1,r+h_{2}+1) & \cdots & \widetilde{\Lambda}_{2,n}(r+\triangle_{n}+1,r+h_{2}+1) \\
    \vdots & \vdots & \vdots \\
      \widetilde{\Lambda}_{n,1}(r+\triangle_{1}+1,r+h_{n}+1) & \cdots & \widetilde{\Lambda}_{n,n}(r+\triangle_{n}+1,r+h_{n}+1)
  \end{array}
\right],
\]
with
\[ \widetilde{\Lambda}_{i\ell}:=\left\{
  \begin{array}{ll}
{\bf{I}}-\ \Lambda_{{\ell\ell}}, &\ \  i= \ell, \\
- \Lambda_{{i\ell}}, &\ \  i\neq \ell,
  \end{array}
\right.
\]
for $i,\ell=1,...,n$ and $ \widetilde{\Lambda}_{i,j}(r,l)$ refers to $(r,l)$-entry of the matrix $\widetilde{\Lambda}_{i,j}$.
\end{theorem}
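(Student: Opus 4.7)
The plan is to prove the recursion by induction on $r\ge 0$, reducing everything to a direct computation of $L$ applied to monomial vectors of the form $t^{k\sigma}\mathbf{e}_i$ using the matrix representation established in Theorem \ref{Int}. The base case $r=0$ follows from the same identity used in the inductive step with the convention that sums with empty index range vanish (so that ${\bf Q}_j^{h_j}(t)=\sum_i d_{ij}t^{\Delta_i\sigma}\mathbf{e}_i$ and ${\bf R}_j^{h_j}(t)=0$), while the inductive step assumes the assertion holds for all indices $h_v+s$ with $s\le r-1$, which covers every ${\bf Q}_v^{l-1}$ appearing on the right-hand side of \eqref{12341} since $l-1\le r+h_v-1$.

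The first thing I would do is translate the definition of $L$ into the monomial basis $\underline{\mathbf{X}}_t^\sigma$. Using $LY=Y-\int_0^t K(t,s)Y(s)\,ds$ and Theorem \ref{Int}, the $v$-th component of $L(t^{k\sigma}\mathbf{e}_i)$ becomes
\begin{equation*}
\bigl(L(t^{k\sigma}\mathbf{e}_i)\bigr)_v=\delta_{vi}t^{k\sigma}-\mathbf{e}_{k+1}^{T}\Lambda_{v,i}\underline{\mathbf{X}}_t^{\sigma}=\mathbf{e}_{k+1}^{T}\widetilde{\Lambda}_{v,i}\underline{\mathbf{X}}_t^{\sigma}=\sum_{l\ge 1}\widetilde{\Lambda}_{v,i}(k+1,l)\,t^{(l-1)\sigma},
\end{equation*}
which is the single identity driving the whole proof.

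Next I would apply $L$ to the right-hand side of the proposed formula for ${\bf Q}_j^{h_j+r}$. By linearity and the inductive hypothesis $L({\bf Q}_v^{l-1})=t^{(l-1)\sigma}\mathbf{e}_v+{\bf R}_v^{l-1}(t)$, one obtains
\begin{equation*}
L\bigl({\bf Q}_j^{h_j+r}\bigr)=\sum_i d_{ij}\Bigl(\sum_{v,l\ge 1}\widetilde{\Lambda}_{v,i}(r+\Delta_i+1,l)t^{(l-1)\sigma}\mathbf{e}_v-\sum_{v}\sum_{l=1}^{r+h_v}\widetilde{\Lambda}_{v,i}(r+\Delta_i+1,l)\bigl(t^{(l-1)\sigma}\mathbf{e}_v+{\bf R}_v^{l-1}(t)\bigr)\Bigr).
\end{equation*}
The crucial structural fact is that the contributions from $l\le r+h_v$ cancel between the two sums, and the key step is to argue that only the single column $l=r+h_v+1$ contributes from the tail. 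This follows from the definition of the height: in row $r+\Delta_i+1$ of $\widetilde{\Lambda}_{v,i}$ the last nonzero entry lies in column $r+\Delta_i+1+h_{v,i}$, and by definition of $\Delta_i$ we have $\Delta_i+h_{v,i}\le h_v$, so all entries with $l>r+h_v+1$ vanish.

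After that cancellation one is left with
\begin{equation*}
L\bigl({\bf Q}_j^{h_j+r}\bigr)=\sum_v\Bigl(\sum_i d_{ij}\,\widetilde{\Lambda}_{v,i}(r+\Delta_i+1,r+h_v+1)\Bigr)t^{(r+h_v)\sigma}\mathbf{e}_v-\sum_i d_{ij}\sum_{v}\sum_{l=1}^{r+h_v}\widetilde{\Lambda}_{v,i}(r+\Delta_i+1,l)\,{\bf R}_v^{l-1}(t),
\end{equation*}
and the coefficient matrix of the leading polynomial part is precisely $P_r D=P_r P_r^{-1}=I$ by definition of $d_{ij}$. Hence only $v=j$ survives, producing the required leading term $t^{(h_j+r)\sigma}\mathbf{e}_j$, while the remaining residual combination matches the stated ${\bf R}_j^{h_j+r}(t)$, and lies in $\mathcal{R}(t)$ by the induction hypothesis.

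The main obstacle I expect is bookkeeping: justifying the two sharp cutoffs $l\le r+h_v+1$ (from the height definition combined with the minimality in $\Delta_i$) and $l-1<h_j+r$ (needed so the induction hypothesis applies to each ${\bf Q}_v^{l-1}$). A secondary point is that the formula is well-defined only when $P_r$ is invertible, i.e.\ exactly when $h_j+r\notin S_j$ in the sense of Definition \ref{D2}; this is the precise content of the clause ``${\bf Q}_i^j$ does not exist'' for $j\in S_i$ and should be recorded explicitly.
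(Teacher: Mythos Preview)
Your verification-by-applying-$L$ argument is essentially the paper's own proof run in reverse: the paper starts from the computation of $L(t^{(r+\Delta_i)\sigma}\mathbf{e}_i)$ via Theorem~\ref{Int}, splits off the top-degree terms to form the matrix $P_r$, multiplies on the right by $D=P_r^{-1}$, and then substitutes $t^{(l-1)\sigma}\mathbf{e}_v = L(\mathbf{Q}_v^{l-1}) - \mathbf{R}_v^{l-1}$ to read off the recursion. Your key identity for $(L(t^{k\sigma}\mathbf{e}_i))_v$, the height-based cutoff $\Delta_i + h_{vi} \le h_v$, and the use of $P_r D = I$ are exactly the same ingredients, so the overall strategy is correct and matches the paper.

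There is, however, a concrete slip in your initialization. For $r=0$ the inner sums $\sum_{l=1}^{h_v}$ are \emph{not} empty (each has $h_v$ terms, and $h_v\ge 1$ whenever the $v$-th row of the kernel matrix is nontrivial), so your conclusion that $\mathbf{Q}_j^{h_j}(t) = \sum_i d_{ij}t^{\Delta_i\sigma}\mathbf{e}_i$ with $\mathbf{R}_j^{h_j}(t) = 0$ does not follow. What actually seeds the recursion is the convention
\[
\mathbf{Q}_i^{\,j}(t)=\mathbf{0},\qquad \mathbf{R}_i^{\,j}(t)=-t^{j\sigma}\mathbf{e}_i,\qquad j \in S_i=\{0,\dots,h_i-1\},
\]
which the paper imposes precisely so that $L(\mathbf{Q}_i^{\,j})=t^{j\sigma}\mathbf{e}_i+\mathbf{R}_i^{\,j}$ holds trivially for those indices. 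With this convention your inductive step goes through verbatim: the terms $\mathbf{Q}_v^{l-1}$ with $l-1<h_v$ contribute nothing to $\mathbf{Q}_j^{h_j+r}$, but their nonzero residuals $\mathbf{R}_v^{l-1}=-t^{(l-1)\sigma}\mathbf{e}_v$ \emph{do} contribute to $\mathbf{R}_j^{h_j+r}$, which is therefore generically nonzero already at $r=0$. In other words, $r=0$ is the first instance of the inductive step, not a separate base case; the true base of the induction is the convention above.
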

\begin{proof}From Theorem \ref{Int}, we can get
\begin{eqnarray}\label{1234}
L(t^{r\sigma}{\bf{e}}_{\ell})&=&t^{r\sigma}{\bf{e}}_{\ell}-\int_{0}^{t}K(t,s)s^{r\sigma}{\bf{e}}_{\ell}ds\nonumber\\
&=& \left[
  \begin{array}{c}
    -\displaystyle\int_{0}^{t}(t-s)^{\alpha_{1\ell}-1}k_{1\ell}(t,s)s^{r\sigma}ds   \\
   \vdots \\
 t^{r\sigma}-\displaystyle\int_{0}^{t}(t-s)^{\alpha_{\ell\ell}-1}k_{\ell\ell}(t,s)s^{r\sigma}ds  \\
   \vdots \\
     -\displaystyle\int_{0}^{t}(t-s)^{\alpha_{n\ell}-1}k_{n\ell}(t,s)s^{r\sigma}ds  \\
  \end{array}
\right]=\left[
  \begin{array}{c}
    -{\bf{e}}^{T}_{r+1}\ \Lambda_{{1\ell}}\underline{\bf{X}}^{\sigma}_{t}  \\
   \vdots \\
    {\bf{e}}^{T}_{r+1}({\bf{I}}-\ \Lambda_{{\ell\ell}}) \
\underline{\bf{X}}^{\sigma}_{t} \\
   \vdots \\
     -{\bf{e}}^{T}_{r+1}\ \Lambda_{{n\ell}}\underline{\bf{X}}^{\sigma}_{t}  \\
  \end{array}
\right]=\left[
  \begin{array}{c}
    {\bf{e}}^{T}_{r+1}\ \widetilde{\Lambda}_{{1\ell}}\underline{\bf{X}}^{\sigma}_{t}  \\
   \vdots \\
    {\bf{e}}^{T}_{r+1}\widetilde{\Lambda}_{{\ell\ell}} \
\underline{\bf{X}}^{\sigma}_{t} \\
   \vdots \\
     {\bf{e}}^{T}_{r+1}\ \widetilde{\Lambda}_{{n\ell}}\underline{\bf{X}}^{\sigma}_{t}  \\
  \end{array}
\right]\nonumber\\
\end{eqnarray}
 for $\ell=1,...,n$ and $r\geq 0$. Then,  by replacing  $t^{(r+\triangle_{j})\sigma}\rightarrow t^{r\sigma}$ in (\ref{1234}) we obtain
\begin{align}\label{rece}
&\left(L(t^{(r+\triangle_{1})\sigma}{\bf{e}}_{1}),...,L(t^{(r+\triangle_{n})\sigma}{\bf{e}}_{n}) \right)=\left[
  \begin{array}{cccc}
  {\bf{e}}^{T}_{r+\triangle_{1}+1}\ \widetilde{\Lambda}_{{11}}\underline{\bf{X}}^{\sigma}_{t}   & \cdots & \cdots & {\bf{e}}^{T}_{r+\triangle_{n}+1}\ \widetilde{\Lambda}_{{1n}}\underline{\bf{X}}^{\sigma}_{t}\\\\
    {\bf{e}}^{T}_{r+\triangle_{1}+1}\ \widetilde{\Lambda}_{{21}}\underline{\bf{X}}^{\sigma}_{t}   & \cdots& \cdots & {\bf{e}}^{T}_{r+\triangle_{n}+1}\ \widetilde{\Lambda}_{{2n}}\underline{\bf{X}}^{\sigma}_{t}\\
    \vdots & \vdots & \vdots & \vdots \\
     {\bf{e}}^{T}_{r+\triangle_{1}+1}\ \widetilde{\Lambda}_{{n1}}\underline{\bf{X}}^{\sigma}_{t}   & \cdots & \cdots & {\bf{e}}^{T}_{r+\triangle_{n}+1}\ \widetilde{\Lambda}_{{nn}}\underline{\bf{X}}^{\sigma}_{t}\\
  \end{array}
\right]\nonumber \\
&=\displaystyle \left[
  \begin{array}{ccc}
   \displaystyle\sum_{l=1}^{r+h_{1}+1}\widetilde{\Lambda}_{1,1}(r+\triangle_{1}+1,l)t^{(l-1)\sigma} & \cdots & \displaystyle\sum_{l=1}^{r+h_{1}+1}\widetilde{\Lambda}_{1,n}(r+\triangle_{n}+1,l)t^{(l-1)\sigma} \\
      \displaystyle\sum_{l=1}^{r+h_{2}+1}\widetilde{\Lambda}_{2,1}(r+\triangle_{1}+1,l)t^{(l-1)\sigma} & \cdots & \displaystyle\sum_{l=1}^{r+h_{2}+1}\widetilde{\Lambda}_{2,n}(r+\triangle_{n}+1,l)t^{(l-1)\sigma} \\
  \vdots   & \vdots & \vdots \\
      \displaystyle\sum_{l=1}^{r+h_{n}+1}\widetilde{\Lambda}_{n,1}(r+\triangle_{1}+1,l)t^{(l-1)\sigma} & \cdots & \displaystyle\sum_{l=1}^{r+h_{n}+1}\widetilde{\Lambda}_{n,n}(r+\triangle_{n}+1,l)t^{(l-1)\sigma}
  \end{array}
\right]\nonumber \\
\nonumber\\
&=\left[
  \begin{array}{ccc}
    t^{(r+h_{1})\sigma} & \ldots & 0 \\
    \vdots & \ddots & \vdots \\
    0 & \cdots  & t^{({r+h_{n}})\sigma}  \\
  \end{array}
\right].
 \left[
  \begin{array}{ccc}
    \widetilde{\Lambda}_{1,1}(r+\triangle_{1}+1,r+h_{1}+1) &\cdots & \widetilde{\Lambda}_{1,n}(r+\triangle_{n}+1,r+h_{1}+1) \\
      \widetilde{\Lambda}_{2,1}(r+\triangle_{1}+1,r+h_{2}+1) & \cdots & \widetilde{\Lambda}_{2,n}(r+\triangle_{n}+1,r+h_{2}+1) \\
    \vdots & \vdots & \vdots \\
      \widetilde{\Lambda}_{n,1}(r+\triangle_{1}+1,r+h_{n}+1) & \cdots & \widetilde{\Lambda}_{n,n}(r+\triangle_{n}+1,r+h_{n}+1) \\
  \end{array}
\right]
\nonumber \\  \nonumber\\
&+\displaystyle \left[
  \begin{array}{cccc}
   \displaystyle\sum_{l=1}^{r+h_{1}}\widetilde{\Lambda}_{1,1}(r+\triangle_{1}+1,l)t^{(l-1)\sigma} & \cdots & \displaystyle\sum_{l=1}^{r+h_{1}}\widetilde{\Lambda}_{1,n}(r+\triangle_{n}+1,l)t^{(l-1)\sigma} \\
      \displaystyle\sum_{l=1}^{r+h_{2}}\widetilde{\Lambda}_{2,1}(r+\triangle_{1}+1,l)t^{(l-1)\sigma} & \cdots & \displaystyle\sum_{l=1}^{r+h_{2}}\widetilde{\Lambda}_{2,n}(r+\triangle_{n}+1,l)t^{(l-1)\sigma} \\
    \vdots & \vdots & \vdots \\
      \displaystyle\sum_{l=1}^{r+h_{n}}\widetilde{\Lambda}_{n,1}(r+\triangle_{1}+1,l)t^{(l-1)\sigma} & \cdots & \displaystyle\sum_{l=1}^{r+h_{n}}\widetilde{\Lambda}_{n,n}(r+\triangle_{n}+1,l)t^{(l-1)\sigma}
  \end{array}
\right].
\end{align}
Now, multiply both sides of (\ref{rece}) from right by $D=[d_{ij}]_{i,j=1}^{n}$. We obtain
\begin{equation}
\sum_{i=1}^{n}d_{ij}L(t^{(r+\triangle_{i})\sigma}{\bf{e}}_{i})=t^{(r+h_{j})\sigma}{\bf{e}}_{j}+\sum_{i=1}^{n}d_{ij}\left(\sum_{v=1}^{n} \displaystyle\sum_{l=1}^{r+h_{v}}\widetilde{\Lambda}_{v,i}(r+\triangle_{i}+1,l)t^{(l-1)\sigma} {\bf{e}}_{v}\right),\ \ \ 1 \leq j \leq n.
\end{equation}
Set
\begin{equation}\label{121212}
{\bf{Q}}_{i}^{j}(t)={\left[
  \begin{array}{c}
   0 \\
    0 \\
   \vdots \\
    \vdots \\
    0 \\
  \end{array}
\right],}_{n \times 1}
\ \ \ \ \ \ \ \ {\bf{R}}_{i}^{j}(t)={\left[
  \begin{array}{c}
  0\\
    0\\
    -t^{j\sigma}\\
    \vdots\\
    0 \\
  \end{array}
\right]_{n \times 1}\ \leftarrow   \textit{i-th component}}
\end{equation}
for $j\in S_{i}:=\lbrace 0,...,h_{i}-1\rbrace$ and $ i=1,...,n$, therefore
\begin{equation}\label{Jadidb}
L({\bf{Q}}_{i}^{j}(t))=t^{j\sigma}{\bf{e}}_{i}+{\bf{R}}_{i}^{j}(t), \ \ \ j \in S_{i}, \ \ i=1,...,n.
\end{equation}
From Definition \ref{D1} and relation (\ref{Jadidb}) one can write
\begin{align}
\sum_{i=1}^{n}d_{ij}L(t^{(r+\triangle_{i})\sigma}{\bf{e}}_{i})&=t^{(r+h_{j})\sigma}{\bf{e}}_{j}\nonumber\\
&+\sum_{i=1}^{n}d_{ij}\left(\sum_{v=1}^{n} \displaystyle\sum_{l=1}^{r+h_{v}}\widetilde{\Lambda}_{v,i}(r+\triangle_{i}+1,l)\left( L({\bf{Q}}_{v}^{l-1}(t))-{\bf{R}}_{v}^{l-1}(t)\right)\right),\nonumber
\end{align}
then a simple manipulation yields
\begin{align}
&L\left(\sum_{i=1}^{n}d_{ij}\left(t^{(r+\triangle_{i})\sigma}{\bf{e}}_{i}-
\sum_{v=1}^{n} \displaystyle\sum_{l=1}^{r+h_{v}}\widetilde{\Lambda}_{v,i}(r+\triangle_{i}+1,l){\bf{Q}}_{v}^{l-1}(t)
\right)\right)=t^{(r+h_{j})\sigma}{\bf{e}}_{j}\nonumber\\
&-\sum_{i=1}^{n}d_{ij}\left(\sum_{v=1}^{n} \displaystyle\sum_{l=1}^{r+h_{v}}\widetilde{\Lambda}_{v,i}(r+\triangle_{i}+1,l)
{\bf{R}}_{v}^{l-1}(t)\right),\ \ \ 1 \leq j \leq n.
\end{align}
and so Definition \ref{D1} gives (\ref{12341}), which completes the proof.
\end{proof}
\subsection{Construction of recursive  Tau-approximate solution}
Let us associate to Eq. (\ref{014rev}) a $\tau$-problem of the form
\begin{equation}\label{reee2}
LY_{N}(t)=G(t)+H_{N}(t),
\end{equation}
with the exact vector polynomial solution
\[Y_{N}(t)=[y_{N,1}(t),\ldots,y_{N,n}(t)]^{T} \simeq Y(t),
\]
and a perturbation term of the form
\begin{align}\label{hhhh}
H_{N}(t)&=[H_{N,1}(t),H_{N,2}(t),...,H_{N,n}(t)]^{T}\nonumber\\
&=\left[\sum_{j=N+1}^{N+h_{1}}\tau^{N}_{j,1}p_{j}(t),...,\sum_{j=N+1}^{N+h_{N}}\tau^{N}_{j,n}p_{j}(t)\right]^{T}\nonumber\\
&=\sum\limits_{i=1}^{n}\left(\sum\limits_{j=N+1}^{N+h_{i}}\tau^{N}_{j,i}\sum\limits_{\ell=0}^{j}c_{j,\ell}t^{\ell\sigma}\right){\bf{e}}_{i},
\end{align}
where $p_{j}(t)=\sum\limits_{\ell=0}^{j}c_{j,\ell}t^{\ell\sigma}$ are the orthonormal M\"untz-Legendre polynomials. Assume that
\begin{align}\label{fff1}
G(t) \simeq \left[\sum_{\ell=0}^{N}g_{\ell,1} t^{\ell\sigma},...,\sum_{\ell=0}^{N}g_{\ell,n} t^{\ell\sigma}\right]^{T}=\sum_{i=1}^{n}\sum_{\ell=0}^{N}g_{\ell,i}t^{\ell\sigma}{\bf{e}}_{i}.
\end{align}
Now, by substituting (\ref{hhhh}) and (\ref{fff1}) in (\ref{reee2})
and using Definition \ref{D1} and relation (\ref{Jadidb}) one can rewrite
\begin{align}\label{reee1}
L&\left(Y_{N}(t)-\sum_{i=1}^{n}\sum_{\ell=0}^{N}g_{\ell,i}{\bf{Q}}_{i}^{\ell}(t)-
\sum\limits_{i=1}^{n}\sum\limits_{j=N+1}^{N+h_{i}}\tau^{N}_{j,i}\sum\limits_{\ell=0}^{j}c_{j,\ell}{\bf{Q}}_{i}^{\ell}(t)\right)\nonumber\\=
&\underbrace{-\sum_{i=1}^{n}\sum_{\ell=0}^{N}g_{\ell,i}{\bf{R}}_{i}^{\ell}(t)-
\sum\limits_{i=1}^{n}\sum\limits_{j=N+1}^{N+h_{i}}\tau^{N}_{j,i}\sum\limits_{\ell=0}^{j}c_{j,\ell}{\bf{R}}_{i}^{\ell}(t)}_{:=R(t)}
\end{align}
Since $R(t)$ is in the residual space of  the operator $L$, then the relation (\ref{reee1}) is valid  if $R(t)\equiv {\bf{0}}$. Since $ker (L)=\lbrace \mathbf{0} \rbrace$, then we conclude
\begin{equation}\label{wet}
Y_{N}(t)=\sum_{i=1}^{n}\sum_{\ell=0}^{N}g_{\ell,i}{\bf{Q}}_{i}^{\ell}(t)+
\sum\limits_{i=1}^{n}\sum\limits_{j=N+1}^{N+h_{i}}\tau^{N}_{j,i}\sum\limits_{\ell=0}^{j}c_{j,\ell}{\bf{Q}}_{i}^{\ell}(t),
\end{equation}
that is called the recursive Tau-approximate solution of \eqref{1} with unknown $\tau$-parameters $\tau^{N}_{j,i}$ that are the solution of the obtained system of algebraic equations
\begin{equation}\label{javab}
R(t)\equiv {\bf{0}}.
\end{equation}

Note that, if the kernel functions $k_{ij}(t,s)$ are polynomials, the number of $\tau$-parameters does not depend on the degree of Tau-approximate solution. In the following, we briefly give the algorithm for implementing the proposed method as follows:
\begin{description}
  \item[Step 1] Enter $N$ and $\sigma=\frac{1}{\gamma}$.
  \item[Step 2.] Find the height vector $\mathbf{h}:=[h_{1},h_{2},...,h_{N}]$ from Theorem \ref{Int}.
  \item[Step 3.] Use Theorem \ref{newcan} to construct the sequence of vector canonical polynomials ${\bf{Q}}_{j}^{i}(t) $ and residuals ${\bf{R}}_{j}^{i}(t)$.
  \item[Step 4.] Fix the parameters $\tau^{N}_{j,i}$ from (\ref{javab}).
  \item[Step 5.] Use relation (\ref{wet}) to construct the Tau-approximate solution $Y_{N}(t)$.
\end{description}
\section{Convergence analysis}\label{sec3}
In this section,  the convergence property of the recursive Tau-approximation in $L_\infty$-norm is justified. we define $e_{\Pi_{N,\sigma}}u=u-\Pi_{N,\sigma}u$ as the M\"{u}ntz-Legendre truncation error and $e_{N,j}=y_j(t)-y_{N,j}(t)$ for $j=1,2,..., n$, as the error function of M\"{u}ntz-Legendre approximation of the solution $\{y_{j}(t)\}_{j=1}^n$ from the equation \eqref{1} by $\{y_{N,j}\}_{j=1}^n$.
\begin{theorem}\label{theorev1}
Assume that $D_t^k(g_i) \in L^2(\Omega)$, and $D^k(k_{ij}) \in L^2(\Omega \times \Omega)$ for $i,j=1,2,...,n,~k \ge 1$, then the recursive Tau approximations $\{y_{N,i}(t)\}_{i=1}^n$ converge to $\{y_{i}(t)\}_{i=1}^n$.
\end{theorem}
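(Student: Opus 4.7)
The plan is to view the Tau approximation $Y_N$ as the exact solution of a perturbed Volterra system, subtract this from the exact problem \eqref{014rev}, and then control the resulting error equation by a weakly singular Gronwall estimate combined with the Müntz--Legendre projection bounds of Lemmas \ref{lemrev3}--\ref{lemrev4}. Concretely, I would first set $E_N(t):=Y(t)-Y_N(t)$ and subtract $LY_N=G+H_N$, absorbing into $H_N$ the implicit truncation errors from expanding $G$ as in \eqref{fff1} and $k_{ij}$ as in Theorem \ref{Int}. This yields a vector Volterra equation
\begin{equation*}
E_N(t) = F_N(t) + \int_0^t K(t,s)\,E_N(s)\,ds,
\end{equation*}
where $F_N$ aggregates the Tau perturbation $-H_N$ and the kernel/forcing Müntz truncation errors tested against $Y$.

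Using the equivalent form \eqref{1rev} with $\alpha=\min_{ij}\alpha_{ij}$ and the continuity of $\tilde K(t,s)$, I would next apply a generalized (weakly singular) Gronwall inequality for vector Volterra systems to obtain a constant $C$ independent of $N$ such that $\|E_N\|_\infty \le C\|F_N\|_\infty$. This step reduces the theorem to estimating $\|F_N\|_\infty$.

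To bound $\|F_N\|_\infty$, the Müntz--Legendre truncation error bound of Lemma \ref{lemrev3} (and its bivariate analogue for $k_{ij}$) gives decay at rate $N^{3/4-k}(1+N^{1/\sigma-1/2})$ for the $G$- and $K$-approximation pieces, which is arbitrarily small for $k$ sufficiently large by the hypotheses $D_t^k g_i\in L^2(\Omega)$ and $D^k k_{ij}\in L^2(\Omega\times\Omega)$. The delicate term is $\|H_N\|_\infty$: here I would use the Tau residual identity \eqref{javab} to reinterpret $H_N$ as the Müntz tail of the exact solution $Y$. Thanks to the smoothness result of Theorem \ref{th202}, each $y_i(t)$ is a genuine power series in $t^{1/\gamma}=t^\sigma$; after the substitution $t\mapsto t^\sigma$ the components become analytic at the origin, and Lemma \ref{lemrev3} applied to $y_i$ yields $\|y_i-\Pi_{N,\sigma}y_i\|_\infty\to 0$ at an arbitrarily high algebraic rate.

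The main obstacle is the last step: quantifying $\|H_N\|_\infty$ in terms of the Müntz approximation error of the unknown solution $Y$ rather than in terms of the Tau parameters $\tau_{j,i}^N$ themselves. I expect this to require projecting both sides of \eqref{wet} onto $\mathrm{M}_{N,\sigma}(\mathrm{V})^n$, exploiting the orthogonality relation \eqref{eqrev6}, and recasting $R(t)\equiv 0$ as a well-conditioned square linear system in the $\tau_{j,i}^N$ whose right-hand side is precisely the Müntz tail of $Y$. Once the conditioning of this system is controlled uniformly in $N$ (which is plausible because the perturbation basis consists of orthonormal Müntz--Legendre polynomials and the leading matrix $P_r$ in Theorem \ref{newcan} is invertible), the bound $\|H_N\|_\infty\le C\|Y-\Pi_{N,\sigma}Y\|_\infty$ follows, and combining it with the Gronwall estimate completes the convergence proof.
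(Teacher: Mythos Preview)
Your overall architecture---derive a Volterra error equation for $E_N=Y-Y_N$, apply a weakly singular Gronwall inequality to reduce matters to the source term, and then bound the M\"untz--Legendre truncation errors of $g_i$ and $k_{ij}$ via Lemma~\ref{lemrev3}---coincides with the paper's proof. The paper writes the source as $\psi_i=-H_{N,i}+e_{\Pi_{N,\sigma}}(g_i)-\sum_j\int_0^t(t-s)^{\alpha_{ij}-1}e_{\Pi_{N,\sigma}}(k_{ij})\,y_{N,j}\,ds$ and arrives at exactly your intermediate bound $\|e_{N,i}\|_\infty\le C\|\psi_i\|_\infty$.

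The genuine divergence is in your treatment of $\|H_{N,i}\|$. You propose to control it by interpreting the residual system \eqref{javab} as a well-conditioned linear system whose right-hand side is the M\"untz tail of $Y$, and you acknowledge this conditioning step as the main obstacle. The paper bypasses this entirely: it observes that, because the perturbation $H_{N,i}=\sum_{j=N+1}^{N+h_i}\tau_{j,i}^N p_j$ is a combination of orthonormal M\"untz--Legendre polynomials of index $>N$, one has $H_{N,i}\in X_N^\perp$ (the orthogonal complement of $\mathrm{Span}\{p_0,\ldots,p_N\}$). Citing an abstract result of El-Daou and Khajah, the paper argues that the nested complements $X_N^\perp$ shrink to $\{0\}$, hence $\|H_{N,i}\|_{L^2}\to 0$ and, by Parseval, $\tau_{j,i}^N\to 0$. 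No conditioning estimate, no reference to $Y-\Pi_{N,\sigma}Y$, and no use of Theorem~\ref{th202} are needed.

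The trade-off is that the paper's argument is purely qualitative (no rate on $H_N$), whereas your route, \emph{if} the uniform conditioning of the $\tau$-system could be established, would yield an explicit rate tied to the smoothness of $Y$. But as you suspected, that conditioning step is the hard part and is not addressed in the paper; the orthogonality shortcut is what makes the published proof go through.
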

\begin{proof}
According the proposed approach, the approximate solutions $\{y_{N,j}\}_{j=1}^n$ satisfy  the following relation
\begin{equation}\label{erev11}
y_{N,i}(t)=H_{N,i}(t)+\Pi_{N,\sigma}(g_i)+\sum\limits_{j=1}^{n}{\int\limits_{\Omega}{(t-s)^{\alpha_{ij}-1} \Pi_{N,\sigma}(k_{ij}) y_{N,j}(s)ds}},~i=1,2,...,n.
\end{equation}

Subtracting \eqref{erev11} from \eqref{1}, and some simple manipulations we obtain
\begin{equation}\label{erev12}
e_{N,i}(t)=\psi_i-\sum\limits_{j=1}^{n}{\int\limits_{\Omega}{(t-s)^{\alpha_{ij}-1} k_{ij} e_{N,j}(s)ds}},~i=1,2,...,n,
\end{equation}
where
\[
\psi_i=-H_{N,i}(t)+e_{\Pi_{N,\sigma}}(g_i)-\sum\limits_{j=1}^{n}{\int\limits_{\Omega}{(t-s)^{\alpha_{ij}-1} e_{\Pi_{N,\sigma}}(k_{ij}) y_{N,j}(s)ds}},~i=1,2,...,n,
\]

Evidently, the relation \eqref{erev12} can be written as the following matrix formulation
\begin{equation}\label{erev13}
|E_{N}(t)| \le |\Psi(t)|+\|\tilde{K}\|_\infty \int\limits_{\Omega}{(t-s)^{\alpha-1} |E_{N}(s)|ds},
\end{equation}
where
\begin{equation*}
E_N(t) = [e_{N,1}(t), e_{N,2}(t),..., e_{N,n}(t)]^T,~~\Psi = [\psi_1(t), \psi_2(t),... \psi_n(t)]^T
\end{equation*}
and $|.|$ refers to componentwise absolute value. Applying Gronwall's inequality \cite{mokhs3} in the inequality \eqref{erev13} we can conclude
\[
\|E_N\|_\infty \le C \|\Psi\|_\infty,
\]
where $\|.\|_\infty$ stands for componentwise uniform norm. Consequently for $i=1,2,...,n$ we have
\begin{eqnarray}\label{erev14}
  \nonumber \|e_{N,i}\|_\infty & \le & \|\psi_i\|_\infty\\
 \nonumber  & \le &\|H_{N,i}\|_\infty +\|e_{\Pi_{N,\sigma}}(g_i)\|_\infty+ \sum\limits_{j=1}^{n}\|e_{\Pi_{N,\sigma}}(k_{ij})\|_\infty \|y_{N,j}\|_\infty\\
   & \le & \|H_{N,i}\|_\infty +\|e_{\Pi_{N,\sigma}}(g_i)\|_\infty+ \sum\limits_{j=1}^{n}  \|e_{\Pi_{N,\sigma}}(k_{ij})\|_\infty (\|y_j\|_\infty+\|e_{N,j}\|_\infty).
\end{eqnarray}

Applying Lemma \ref{lemrev3} in \eqref{erev14}, it can be deduced that for sufficiently large values of $N$ we have
\begin{equation}\label{erev15}
\|e_{N,j}\|_\infty \le  \|H_{N,i}\|_\infty.
\end{equation}
and thereby the proposed recursive Tau scheme converges iff $\|H_{N,i}\|_\infty$ as $N \to \infty$. To this end, assume that $X_{n}=\text{Span}\lbrace p_{0,\sigma}(t),...,p_{N,\sigma}(t)\rbrace$ and denote the orthogonal complement of the $X_{n}$ by $X^{\perp}_{n}$. Since,
\[
({H}_{N,i}(t), p_{j,\sigma}(t))=\left(\sum_{l=N+1}^{N+h_{i}}\tau^{N}_{l,i}p_{l,\sigma}(t),p_{j,\sigma}(t)\right) =0,\ \ \ i=0,1,...,n,\ j=0,1,...,N,
\]
then
\[
H_{N,i}(t) \in X^{\perp}_{n}.
\]

From Theorem 3 in \cite{Khajah}, we have the following results
\begin{enumerate}
\item[(A1)] The subspaces $X^{\perp}_{i},\ i=0,1,...$ form a decreasing sequence in $X=\text{Span}\lbrace p_{i,\alpha}(t): i=0,1,2,...\rbrace$, i.e.,
\[
\cdots \subset X^{\perp}_{3}\subset X^{\perp}_{2}\subset X^{\perp}_{1}.
\]
\item[(A2)]
\[
\lim_{n\rightarrow \infty}\mathrm{diam}(X^{\perp}_{n})=0;\ \ \ \mathrm{diam}(X^{\perp}_{n})=\underset{{a,b \in X^{\perp}_{n}}}{\sup}\Vert a-b \Vert.
\]
\item[(A3)]
\[
\overset{\infty}{\underset{i=1}{\bigcap}} X^{\perp}_{i}=\lbrace 0\rbrace.
\]
\end{enumerate}

From (A1), we obtain
\[
H_{N,i}(t)\in \overset{N}{\underset{l=0}{\bigcap}} X^{\perp}_{l},\ \ \ i=1,...,n,
\]
which along with (A2), yields
\begin{equation}\label{EEE}
\lim_{N \rightarrow \infty}H_{N,i}(t) \in \overset{\infty}{\underset{l=1}{\bigcap}} X^{\perp}_{l}=\lbrace 0\rbrace \ \ i.e.,  \ \  \lim_{N \rightarrow \infty}\Vert H_{N,i}(t)\Vert_{L^{2}}=0.
\end{equation}

Finally, inserting \eqref{EEE} into \eqref{erev15} completes the proof.
\end{proof}

\begin{remark}\label{Rem1}
The Parseval's identity yields
\[
\lim_{N \rightarrow \infty}\sum_{j=N+1}^{N+h_{i}}{\vert\tau^{N}_{j,i}\vert}^{2}=\lim_{N \rightarrow \infty}\Vert {H}_{N,i}(t)\Vert^{2}=0,\ \ \ \ i.e., \ \ \lim_{N \rightarrow \infty}{\vert\tau^{N}_{j,i}\vert}^{2}=0,~~i=1,2,...,n.
\]

Therefore, we conclude from \eqref{erev15} that the convergence rate of approximate solution is same as the convergence rate of $\tau^{N}_{j,i}$ to zero.
\end{remark}
\section{Numerical illustration}\label{sec4}
This section consists of the implementation of our method on some examples. The results obtained from our method show the accuracy and superiority of the method compared with those in \cite{Ha,hybrid}. All calculations are done in Maple 2018 software.
\begin{example}\label{3x}
Consider the problem \eqref{1rev} with
\[
K(t,s)=\left[
  \begin{array}{cccc}
   0& && \frac{(t-s)^{\alpha -1}}{\Gamma(\alpha)} \\
    \\
     -\frac{(t-s)^{\alpha -1}}{\Gamma(\alpha)}&&& -\frac{(t-s)^{\alpha -1}}{\Gamma(\alpha)} \\
  \end{array}
\right],
\]
\[
G(t)=\left[
  \begin{array}{c}
0\\\\
\frac{\Gamma(\alpha+1)t^{2\alpha+1}}{\Gamma(2\alpha+1)}+\frac{\Gamma(\alpha+1)\pi csc(\pi \alpha)t}{\Gamma(-\alpha-1)\Gamma(2-\alpha)}+\frac{\Gamma(\alpha+1)\pi csc(\pi \alpha)t^{\alpha+1}}{\Gamma(-\alpha-1)\Gamma(\alpha+2)} \\
  \end{array}
\right],\ \ \
\]
\[
 Y(t)=[t^{1+\alpha},\frac{\pi \alpha (\alpha+1)csc(\pi \alpha)}{\Gamma(1-\alpha)}t]^{T}.
\]
For $\alpha=\frac{1}{4}$, we have
\[g_1(t)=0, \quad
g_{2}(t)=\frac{\sqrt{2 \pi}}{3\Gamma(\frac{3}{4})}t^{\frac{6}{4}}+t^{\frac{5}{4}}+\frac{5\sqrt{2}\pi}{16\Gamma(\frac{3}{4})}t=\sum_{\ell=0}^{6}g_{\ell,2}t^{\frac{\ell}{4}}.
\]
From  \textbf{Step 2.}, we obtain
\[
\Lambda_{12}=-\Lambda_{21}=-\Lambda_{22}=\frac{1}{\Gamma(\frac{1}{4})}\left[
  \begin{array}{cccccccc}
  0&  \hspace{0.2cm} 4&0&\cdots\\
     0&\hspace{0.2cm} 0 &\beta(\frac{1}{4},\frac{5}{4})& \ddots\\
     0&\hspace{0.2cm} 0 &0&  \ddots \\
     \vdots &\hspace{0.2cm} \vdots &\vdots &\ddots  \\
  \end{array}
\right]
,\ \ \ \Lambda_{11}=\mathbf{0}.
\]
\[
\left\{
  \begin{array}{ll}
      h_{11}=0,\ h_{12}=1, \ h_{1}=\max \lbrace 0,1 \rbrace=1,\\
   h_{21}=1,\ h_{22}=1,  \ h_{2}=\max \lbrace 1,1 \rbrace=1,\\
     \Delta_{1}=\Delta_{2}=0,\\
      \mathbf{h}=[h_{1},h_{2}]=[1,1]\\
  \end{array}
\right.
\]
From  \textbf{Step 3.}, the fractional vector canonical polynomials ${\bf{Q}}_{1}^{r}(t), {\bf{Q}}_{2}^{r}(t)$ and the related residuals ${\bf{R}}_{1}^{r}(t), {\bf{R}}_{2}^{r}(t)$ are determined as
\[
{\bf{Q}}_{1}^{0}(t)={\bf{Q}}_{2}^{0}(t)=[0,0]^{T},\ \ \ \ \ \ \ {\bf{R}}_{1}^{0}=[-1,0]^{T},\ \ {\bf{R}}_{2}^{0}=[0,-1]^{T},
\]
\[
\left\{
  \begin{array}{ll}
{\bf{Q}}_{1}^{r+1}(t)=\displaystyle\sum_{i=1}^{2}d_{i1}\left(t^{\frac{r}{4}}{\bf{e}}_{i}-
\sum_{v=1}^{2} \displaystyle\sum_{l=1}^{r+1}\widetilde{\Lambda}_{vi}(r+1,l){\bf{Q}}_{v}^{(l-1)}(t)
\right),\\
{\bf{R}}_{1}^{r+1}=-\displaystyle\sum_{i=1}^{2}d_{i1}\left(\sum_{v=1}^{2} \displaystyle\sum_{l=1}^{r+1}\widetilde{\Lambda}_{vi}(r+1,l)
{\bf{R}}_{v}^{(l-1)}\right),
\end{array}
\right.
\]
and
\[
\left\{
  \begin{array}{ll}
{\bf{Q}}_{2}^{r+1}(t)=\displaystyle\sum_{i=1}^{2}d_{i2}\left(t^{\frac{r}{4}}{\bf{e}}_{i}-
\sum_{v=1}^{2} \displaystyle\sum_{l=1}^{r+1}\widetilde{\Lambda}_{vi}(r+1,l){\bf{Q}}_{v}^{(l-1)}(t)
\right),\\
 {\bf{R}}_{2}^{r+1}=-\displaystyle\sum_{i=1}^{2}d_{i2}\left(\sum_{v=1}^{2} \displaystyle\sum_{l=1}^{r+1}\widetilde{\Lambda}_{vi}(r+1,l)
{\bf{R}}_{v}^{(l-1)}\right),
\end{array}
\right.
\]
for all $r\geq 0$, in which
\[
 \widetilde{\Lambda}_{11}={\bf{I}}, \ \ \  \widetilde{\Lambda}_{22}={\bf{I}}-{\Lambda}_{22}=\left[
  \begin{array}{ccccccccc}
  1&\frac{4}{\Gamma(\frac{1}{4})}&0&0&\cdots\\
     0&1 &\frac{\beta(\frac{1}{4},\frac{5}{4})}{\Gamma(\frac{1}{4})}&0& \cdots\\
     0&\cdots &1& \frac{\beta(\frac{1}{4},\frac{3}{2})}{\Gamma(\frac{1}{4})}& \ddots \\
     \vdots &\cdots &\vdots &\ddots &\ddots  \\
  \end{array}
\right],
\]
\[
 \widetilde{\Lambda}_{12}=-{\Lambda}_{12}=\left[
  \begin{array}{ccccccccc}
  0&-\frac{4}{\Gamma(\frac{1}{4})}&0&0&\cdots\\
     0&0 &-\frac{\beta(\frac{1}{4},\frac{5}{4})}{\Gamma(\frac{1}{4})}&0& \cdots\\
     0&\cdots &0& -\frac{\beta(\frac{1}{4},\frac{3}{2})}{\Gamma(\frac{1}{4})}& \ddots \\
     \vdots &\cdots &\vdots &\ddots &\ddots  \\
  \end{array}
\right],
\]
\[
 \widetilde{\Lambda}_{21}=-{\Lambda}_{21}=\left[
  \begin{array}{ccccccccc}
  0&\frac{4}{\Gamma(\frac{1}{4})}&0&0&\cdots\\
     0&0 &\frac{\beta(\frac{1}{4},\frac{5}{4})}{\Gamma(\frac{1}{4})}&0& \cdots\\
     0&\cdots &0& \frac{\beta(\frac{1}{4},\frac{3}{2})}{\Gamma(\frac{1}{4})}& \ddots \\
     \vdots &\cdots &\vdots &\ddots &\ddots  \\
  \end{array}
\right],
\]
\[
P_{r}=\left[
  \begin{array}{cc}
    \widetilde{\Lambda}_{11}(r+1,r+2)  & \widetilde{\Lambda}_{12}(r+1,r+2) \\
      \widetilde{\Lambda}_{21}(r+1,r+2) & \widetilde{\Lambda}_{22}(r+1,r+2) \\
  \end{array}
\right]=\left[
  \begin{array}{cc}
    0 &  -\frac{\beta(\frac{1}{4},\frac{r+4}{4})}{\Gamma(\frac{1}{4})} \\
    \frac{\beta(\frac{1}{4},\frac{r+4}{4})}{\Gamma(\frac{1}{4})}&\frac{\beta(\frac{1}{4},\frac{r+4}{4})}{\Gamma(\frac{1}{4})}  \\
  \end{array}
\right],
\]
\[
D={P_{r}}^{-1}=\left[
  \begin{array}{cc}
    d_{11}  & d_{12}\\
      d_{21} & d_{22} \\
  \end{array}
\right]=\frac{1}{\Gamma(\frac{1}{4})} \left[
  \begin{array}{cc}
    \frac{1}{\beta(\frac{1}{4},\frac{r+4}{4})} &  \frac{1}{\beta(\frac{1}{4},\frac{r+4}{4})}\\
    \frac{-1}{\beta(\frac{1}{4},\frac{r+4}{4})}&0  \\
  \end{array}
\right].
\]

 From \textbf{Step 4.}, the unknown $\tau$-parameters  $\tau^{N}_{N+1,1}$, $\tau^{N}_{N+1,2}$ are determined by solving   $2\times 2$ linear algebraic system
\begin{equation}\label{2dr2}
M_{2\times 2}\underline{\tau}=\underline{b}_{2\times 1},
\end{equation}
where
\[
M_{2\times 2}=\left[\sum\limits_{\ell=0}^{N+1}c_{N+1,\ell}{\bf{R}}_{1}^{\ell};\sum\limits_{\ell=0}^{N+1}c_{N+1,\ell}{\bf{R}}_{2}^{\ell}\right],\  \ \underline{b}_{2\times 1}=-\sum_{\ell=0}^{6}g_{\ell,1}{\bf{R}}_{1}^{\ell}-\sum_{\ell=0}^{6}g_{\ell,2}{\bf{R}}_{2}^{\ell},
\]
\[
\underline{\tau}=[\tau^{N}_{N+1,1}, \tau^{N}_{N+1,2}]^{T},
\]
therefore, for $N=6$ we get from (\ref{2dr2})
\[
\underline{\tau}=[0,0]
\]
and so from \textbf{Step 5.}, the Tau-approximate solution is
\begin{eqnarray}
Y_{N}(t)&=\sum_{\ell=0}^{6}g_{\ell,2}{\bf{Q}}_{2}^{\ell}(t)+
\tau^{6}_{7,1}\sum\limits_{\ell=0}^{7}c_{7,\ell}{\bf{Q}}_{1}^{\ell}(t)+\tau^{6}_{7,2}\sum\limits_{\ell=0}^{7}c_{7,\ell}{\bf{Q}}_{2}^{\ell}(t)=[t^{\frac{5}{4}},\frac{5\sqrt{2}\pi}{16\Gamma(\frac{3}{4})}t]^{T}.
\end{eqnarray}
Therefore, the exact solution of the problem is obtained. The numerical results of hybrid numerical method \cite{hybrid} are shown in Table \ref{x3} in which $m$ is the number of subintervals of $\Omega$.

\begin{table}[!h]
\centering
\caption{The results of example  \ref{3x} in \cite{hybrid}.}
\begin{tabular}{llllllll}
\hline\noalign{\smallskip}
m ~~& &Error ($\alpha=1/4$)&m&&Error ($\alpha=2/3$)\\
\noalign{\smallskip}\hline\noalign{\smallskip}
15& &2.11$\times 10^{-6}$&32&&5.53$\times 10^{-8}$\\
 \noalign{\smallskip}\hline\noalign{\smallskip}
34& &1.45$\times 10^{-7}$&71&&3.58$\times10^{-9}$\\
\noalign{\smallskip}\hline\noalign{\smallskip}
75& &9.43$\times 10^{-9}$&155&&2.28$\times10^{-10}$ \\
 \noalign{\smallskip}\hline\noalign{\smallskip}
166& &6.00$\times 10^{-10}$&331&&1.44$\times10^{-11}$ \\
\noalign{\smallskip}\hline
\end{tabular}\label{x3}
\end{table}
\begin{table}
 \center
\caption{\small{The numerical results of Ref. \cite{Ha}} for example \ref{opv}}
\label{Tab2}
\begin{tabular}{llllll}
\hline\noalign{\smallskip}
t & $e_{1,50}(t)$& $e_{2,50}(t)$& $e_{1,100}(t)$& $e_{2,100}(t)$\\
\noalign{\smallskip}\hline\noalign{\smallskip}
0.1&5.5593$\times 10^{-5}$&4.5601$\times 10^{-4}$&3.1699$\times 10^{-5}$&2.4869$\times 10^{-4}$\\
0.2&4.6189$\times 10^{-4}$&1.8036$\times 10^{-3}$&2.4333$\times 10^{-4}$&9.3104$\times 10^{-4}$\\
0.3&1.4361$\times 10^{-3}$&3.6764$\times 10^{-3}$&7.3706$\times 10^{-4}$&1.8644$\times 10^{-3}$\\
0.4&2.9937$\times 10^{-3}$&5.6995$\times 10^{-3}$&1.5166$\times 10^{-3}$&2.8652$\times 10^{-3}$\\
0.5&4.9480$\times 10^{-3}$&7.4979$\times 10^{-3}$&2.4872$\times 10^{-3}$&3.7497$\times 10^{-3}$\\
0.6&6.9104$\times 10^{-3}$&8.6969$\times 10^{-3}$&3.4558$\times 10^{-3}$&4.3343$\times 10^{-3}$\\
0.7&8.2904$\times 10^{-3}$&8.9215$\times 10^{-3}$&4.1307$\times 10^{-3}$&4.4353$\times 10^{-3}$\\
0.8&8.2954$\times 10^{-3}$&7.7968$\times 10^{-3}$&4.1217$\times 10^{-3}$&3.8690$\times 10^{-3}$\\
0.9&5.9308$\times 10^{-3}$&4.9479$\times 10^{-3}$&2.9405$\times 10^{-3}$&2.4518$\times 10^{-3}$\\
1&0&0&0&0\\
\noalign{\smallskip}\hline
\end{tabular}
\end{table}
\end{example}
\begin{example}\label{opv}\cite{Ha}
Consider the problem \eqref{1rev} with
\[
K(t,s)=\left[
  \begin{array}{cccc}
    (t-s)^{-1/5}& && (t-s)^{-2/5} \\
    \\
     (t-s)^{-3/5}&&& (t-s)^{-4/5} \\
  \end{array}
\right],
\]
\[
G(t)=\left[
  \begin{array}{c}
  t+t^{2}-\frac{25}{6552}t^{\frac{8}{5}}(130t^{\frac{6}{5}}+182t^{\frac{1}{5}}-210t+273) \\\\
 t-t^{2}-\frac{25}{924}t^{\frac{6}{5}}(55t^{\frac{6}{5}}+66t^{\frac{1}{5}}-140t+154) \\
  \end{array}
\right],
\]
\[
 Y(t)=[t+t^2,t-t^2]^{T}.
\]
\end{example}

During the implementation of the our method, similar to the previous example, the Tau-solution coincide with exact solution. Table \ref{Tab2} represent the results reported in \cite{Ha} for different $n$.
\begin{figure}
\centering
\begin{minipage}{4cm}
\parbox{4cm}{
\includegraphics[width=5cm]{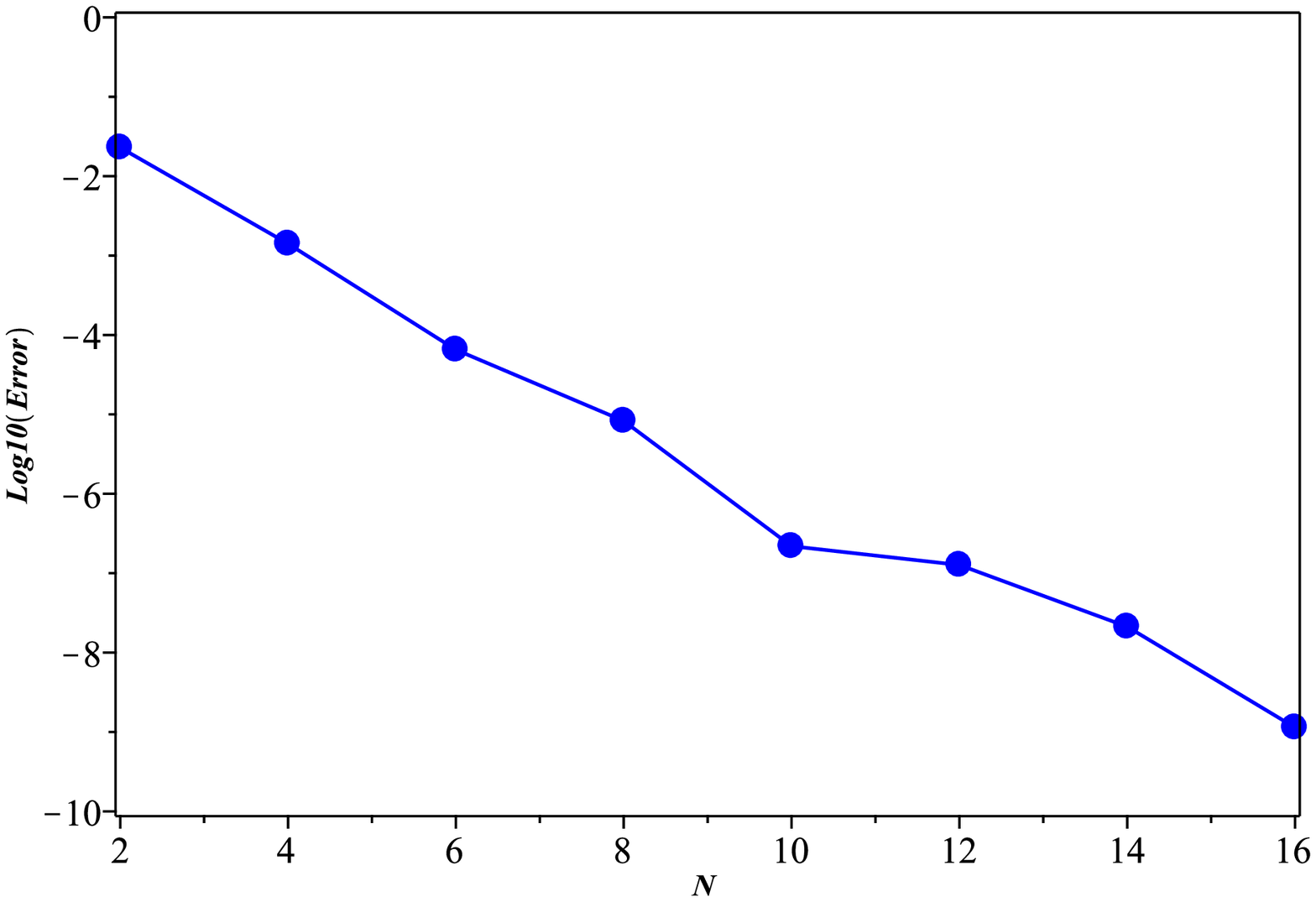}
\caption{\small{Error of $y_{1}(t)$ of Example \ref{vc}  for various $N$.}}
\label{FE1}}
\end{minipage}
\qquad \qquad \qquad
\begin{minipage}{4cm}
\includegraphics[width=5cm]{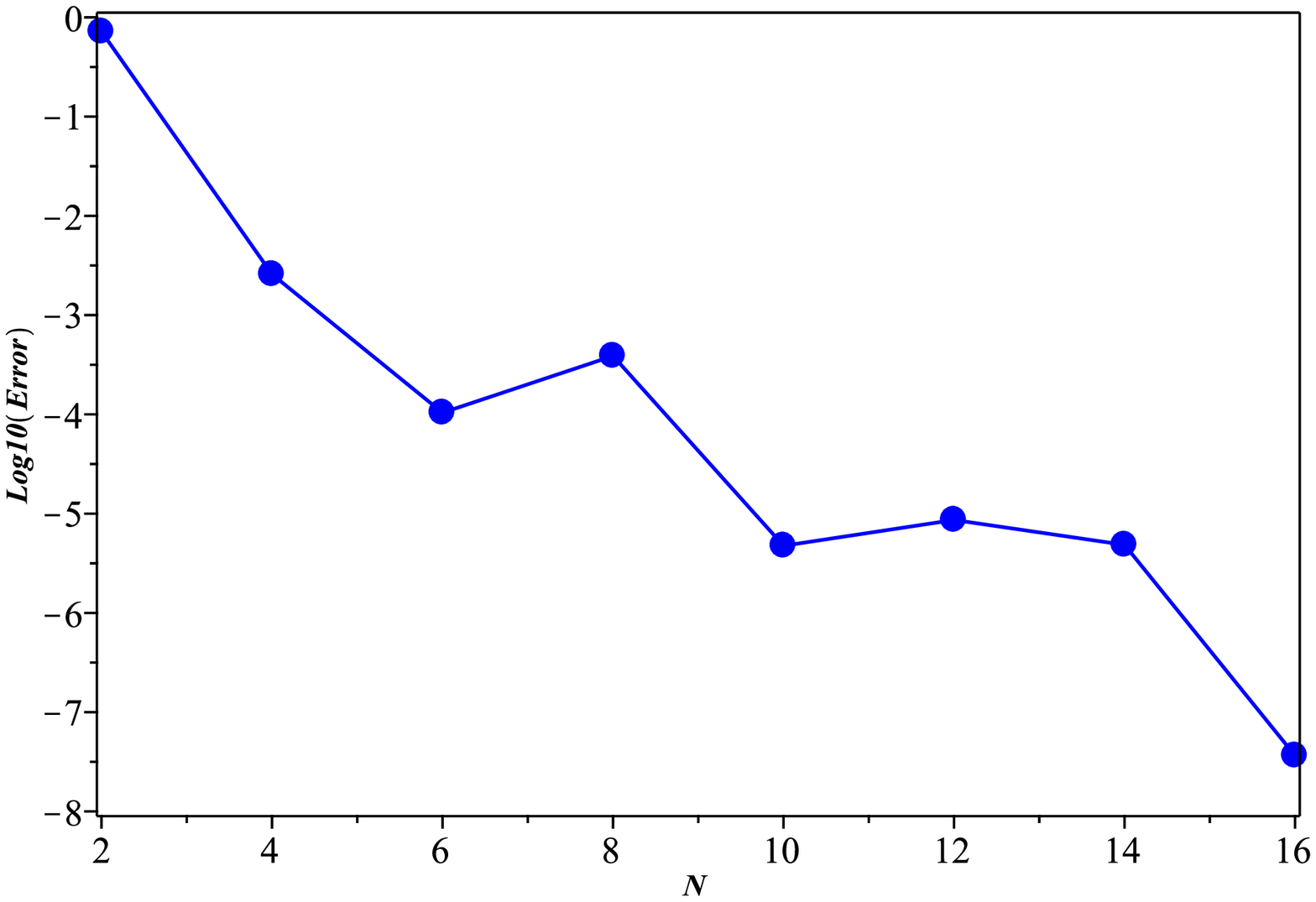}
\caption{\small{Error of $y_{2}(t)$ of Example \ref{vc} for various $N$.}}
\label{FE2}
\end{minipage}
\end{figure}
\begin{figure}
\centering
\begin{minipage}{4cm}
\parbox{4cm}{
\includegraphics[width=5cm]{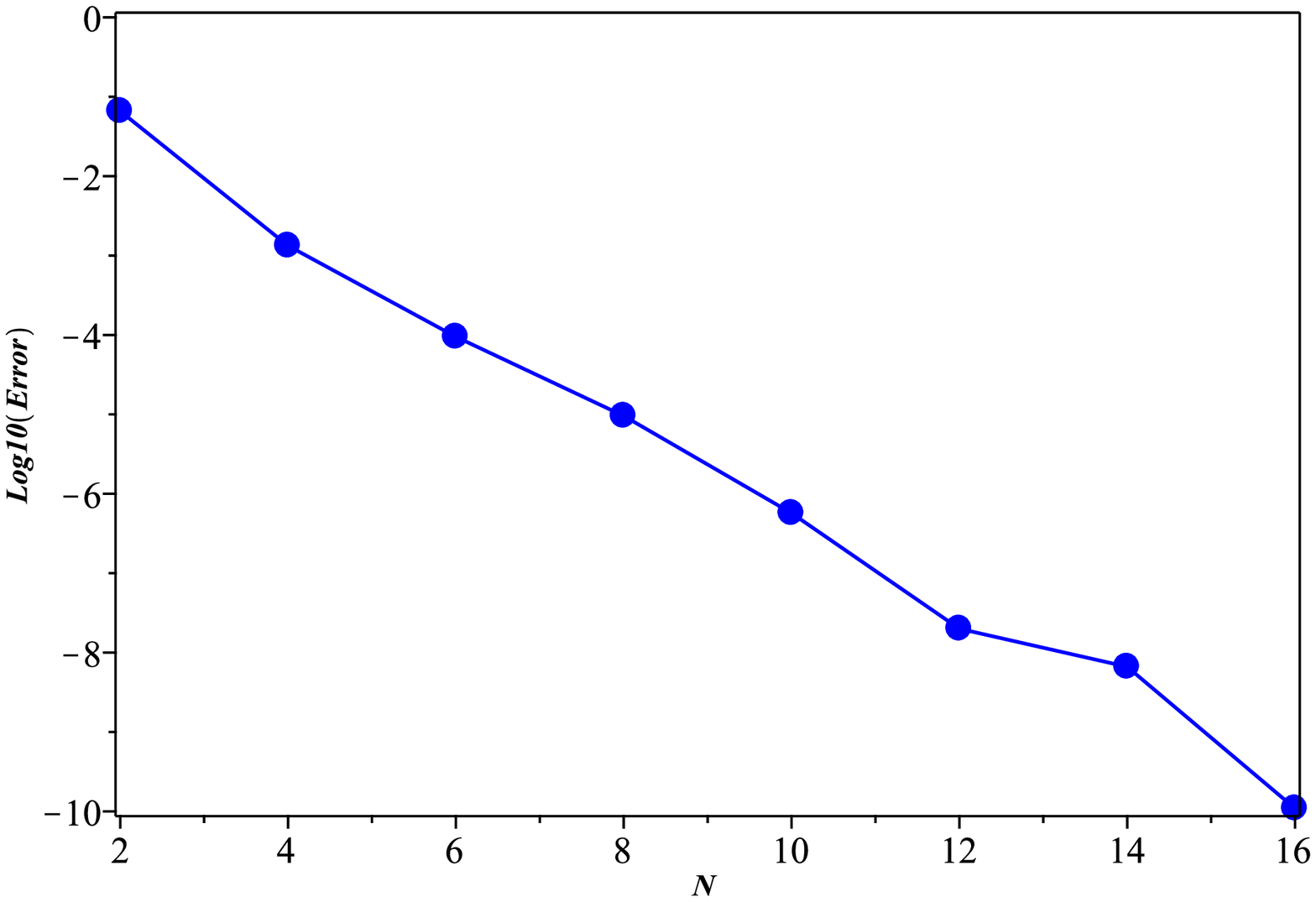}
\caption{\small{Behavior of the $\Vert \widehat{\tau}_{1,N}\Vert$ for the Example \ref{vc} for various $N$.}}
\label{fig6T}}
\end{minipage}
\qquad \qquad \qquad
\begin{minipage}{4cm}
\includegraphics[width=5cm]{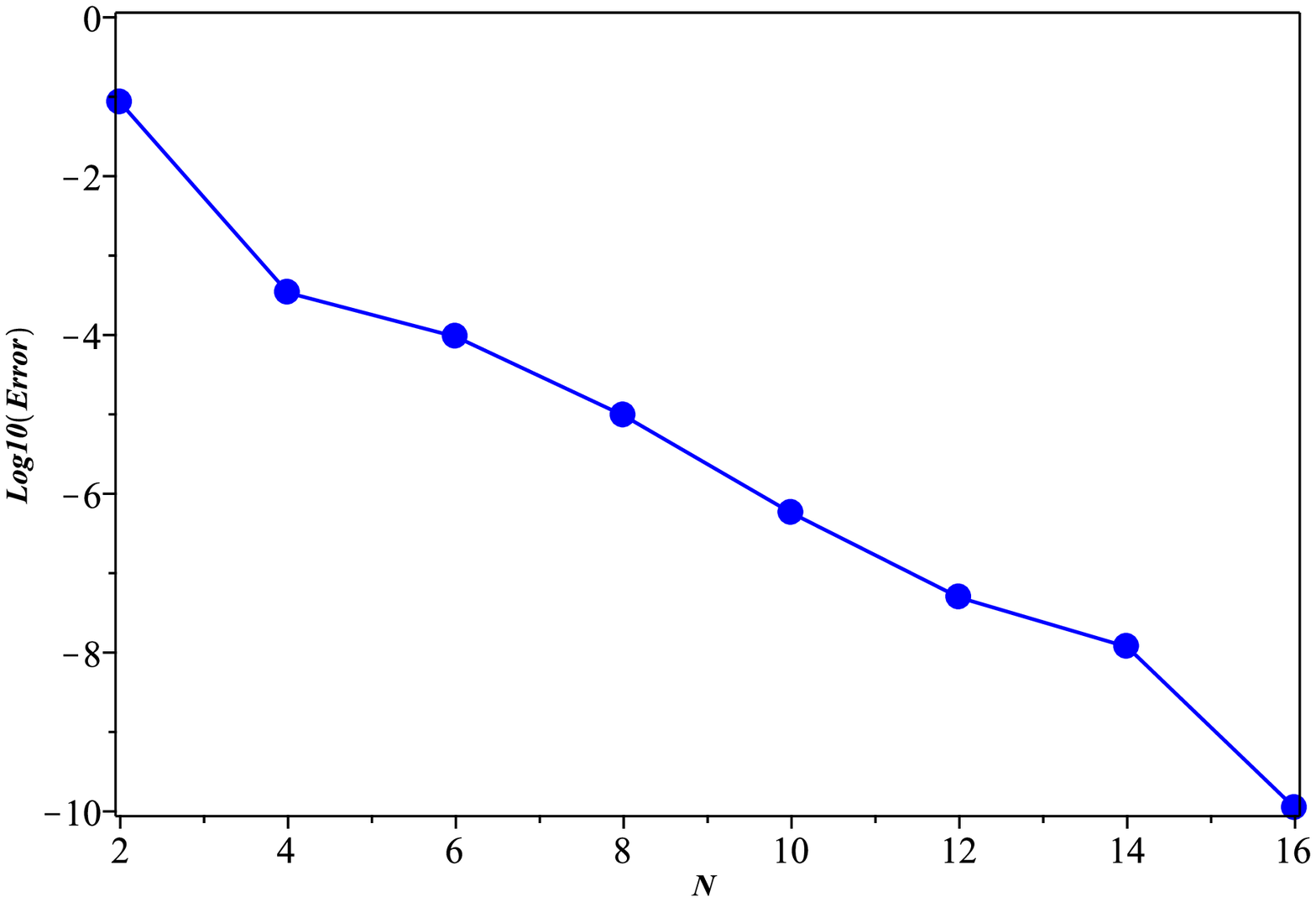}
\caption{\small{Behavior of the $\Vert \widehat{\tau}_{2,N}\Vert$ for the Example \ref{vc} for various $N$.}}
\label{fig7T}
\end{minipage}
\end{figure}
\begin{example}\label{vc}
Consider the problem \eqref{1rev} with
\begin{equation*}
K(t,s)=\left[
  \begin{array}{cccc}
    (t-s)^{-3/4}&&(t-s)^{-1/4}\\
 (t-s)^{-3/4}&&(t-s)^{-2/4} \\
  \end{array}
\right],\ \ \
 Y(t)=\left[
 \begin{array}{c}
 \arctan (\sqrt{t})\\
 \sqrt[4]{t^3}
 \end{array}
 \right].
\end{equation*}
\end{example}
From  \textbf{Step 2.}, we have
\[
\left\{
  \begin{array}{ll}
      h_{11}=1,\ h_{12}=3, \ h_{1}=\max \lbrace 1,3 \rbrace=3,\\
   h_{21}=1,\ h_{22}=2,  \ h_{2}=\max \lbrace 1,2 \rbrace=2,\\
     \Delta_{1}=1,\ \Delta_{2}=0,\\
      \mathbf{h}=[h_{1},h_{2}]=[3,2].
  \end{array}
\right.
\]
Define
\[
\Vert \widehat{\boldsymbol{\tau}}_{i,N}\Vert:=\underset{1\leq j \leq h_{i}}{\max} \vert \tau^{N}_{N+j,i}\vert,\ \ \ i=1,2.
\]
The obtained errors and Tau parameters behavior are reported in Table \ref{Tab1}. From Figs. \ref{FE1}, \ref{FE2}, it is obvious that as $N$ increases,  the $L_\infty$-norm of errors is decayed.
Moreover, the obtained theoretical prediction in Remark \ref{Rem1} is confirmed by Figs. \ref{fig6T}, \ref{fig7T}, i.e., the approximation errors reduce same as reduction of the Tau-parameters to zero.
\begin{table}
 \center
\caption{\small{The numerical results of Example \ref{vc}} }
\label{Tab1}
\begin{tabular}{lllllllll}
\hline\noalign{\smallskip}
N ~~&4&8&10&12&14&16&18&20\\
\noalign{\smallskip}\hline\noalign{\smallskip}
 $ \Vert e_{1,N}\Vert_{\infty}$ &1.41e-03&8.27e-06 &2.19e-07&1.27e-07&2.12e-08 &1.14e-09&1.26e-10&7.38e-12 \\
 \noalign{\smallskip}\hline\noalign{\smallskip}
 $ \Vert e_{2,N}\Vert_{\infty}$&2.58e-03&3.88e-04&4.75e-06&8.64e-06&4.85e-06&3.65e-08&9.19e-09&2.25e-10 \\
  \noalign{\smallskip}\hline\noalign{\smallskip}
 $\Vert \widehat{\tau}_{1,N}\Vert$&1.33e-03&9.57e-06&5.71e-07 &2.00e-08 &6.63e-09&1.09e-10 & 2.51e-11&2.75e-12 \\
  \noalign{\smallskip}\hline\noalign{\smallskip}
 $\Vert \widehat{\boldsymbol{\tau}}_{2,N}\Vert$&3.40e-04&9.66e-06 &5.73e-07&4.94e-08&1.18e-08&1.10e-10 &2.54e-11&2.76e-12 \\
\noalign{\smallskip}\hline
\end{tabular}
\end{table}
\begin{table}[!h]
\centering
\caption{The results of Example  \ref{0111}.}
\begin{tabular}{llllllll}
\hline\noalign{\smallskip}
N ~~& 2&4&6&8&10&12&14\\
\noalign{\smallskip}\hline\noalign{\smallskip}
 $ \Vert e_{1,N}\Vert_{\infty}$&3.06e-2& 1.17e-3&2.06e-4& 1.19e-6&3.89e-7&3.51e-9&2.85e-10 \\
 \noalign{\smallskip}\hline\noalign{\smallskip}
 $ \Vert e_{2,N}\Vert_{\infty}$&5.08e-3&3.24e-3 & 5.41e-4& 9.43e-6 & 2.19e-7 & 8.72e-9& 3.06e-12 \\
\noalign{\smallskip}\hline\noalign{\smallskip}
 $\Vert \widehat{\tau}_{1,N}\Vert$&7.64e-3&1.95e-4&2.58e-4&1.19e-7&3.24-8&2.51e-10 &1.78e-11 \\
 \noalign{\smallskip}\hline\noalign{\smallskip}
$\Vert \widehat{\tau}_{2,N}\Vert$&1.27e-3& 5.41e-4&6.76e-6&9.43e-7&1.83e-8&6.02e-10&1.79e-11  \\
\noalign{\smallskip}\hline
\end{tabular}\label{147}
\end{table}

 \begin{figure}
\centering
\begin{minipage}{4cm}
\parbox{4cm}{
\includegraphics[width=5cm]{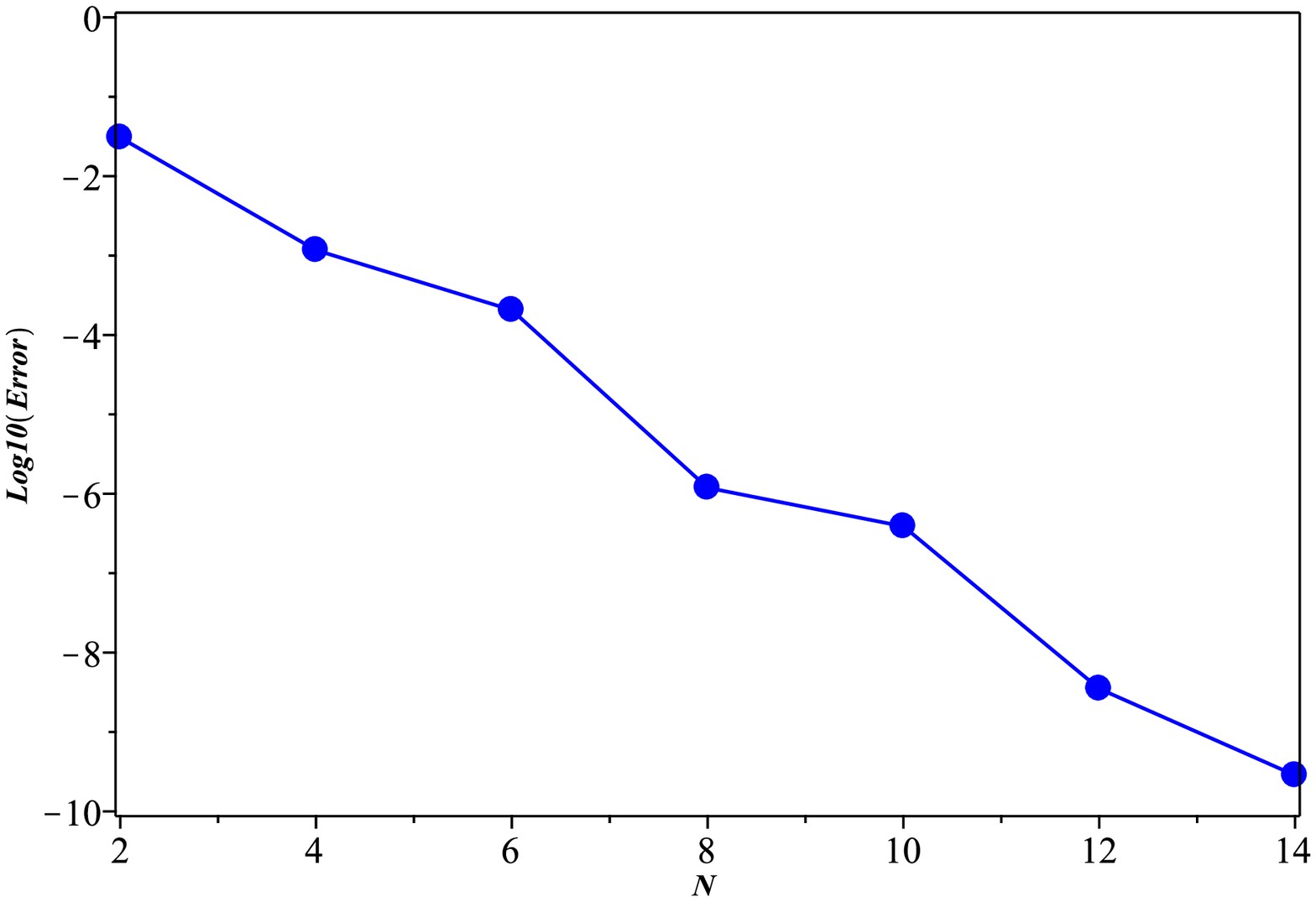}
\caption{\small{Error of $y_{1}(t)$ of Example \ref{0111}  for various $N$.}}
\label{fig4}}
\end{minipage}
\qquad \qquad \qquad
\begin{minipage}{4cm}
\includegraphics[width=5cm]{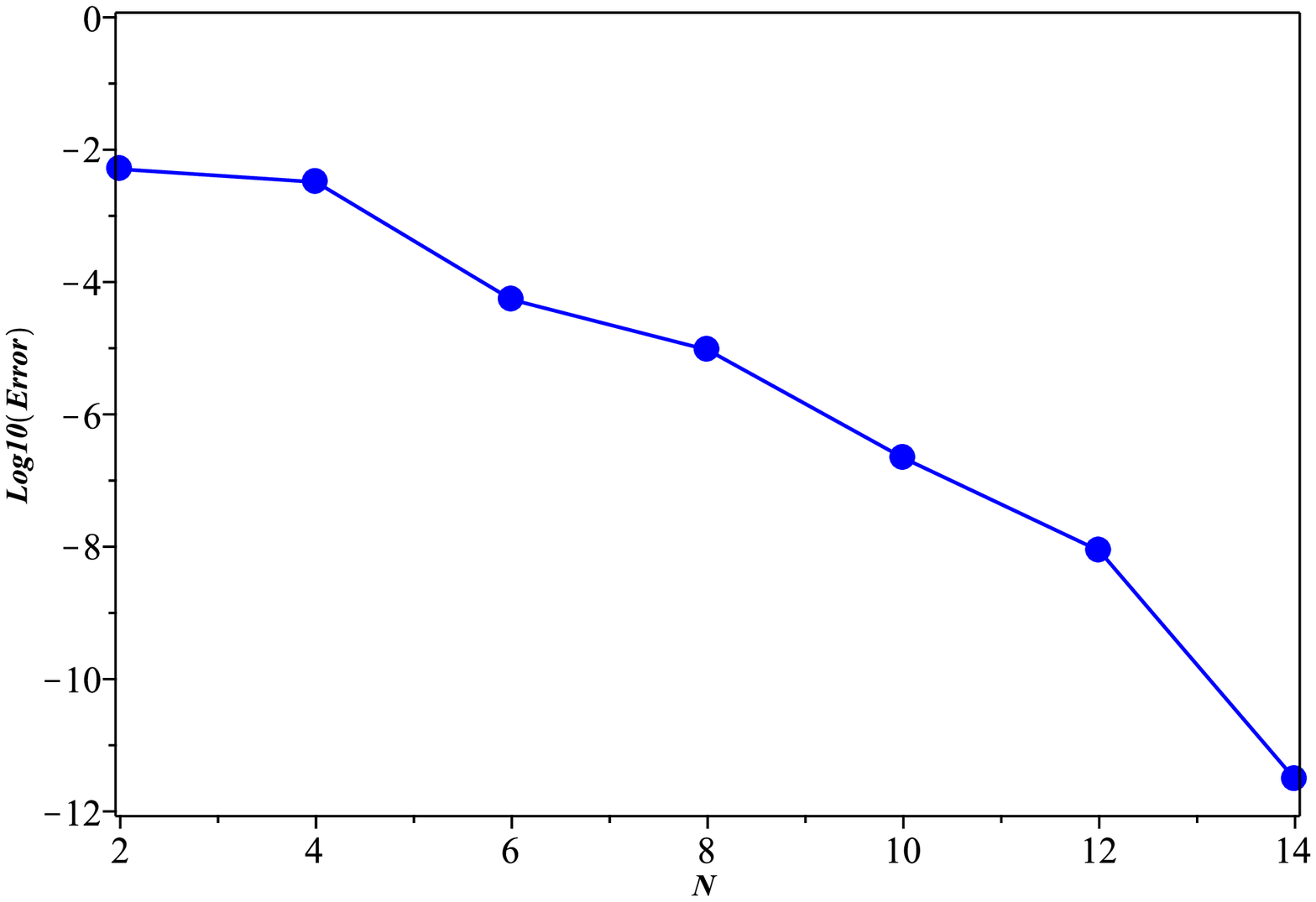}
\caption{\small{Error of $y_{2}(t)$ of Example \ref{0111} for various $N$.}}
\label{fig5}
\end{minipage}
\end{figure}
\begin{figure}
\centering
\begin{minipage}{4cm}
\parbox{4cm}{
\includegraphics[width=5cm]{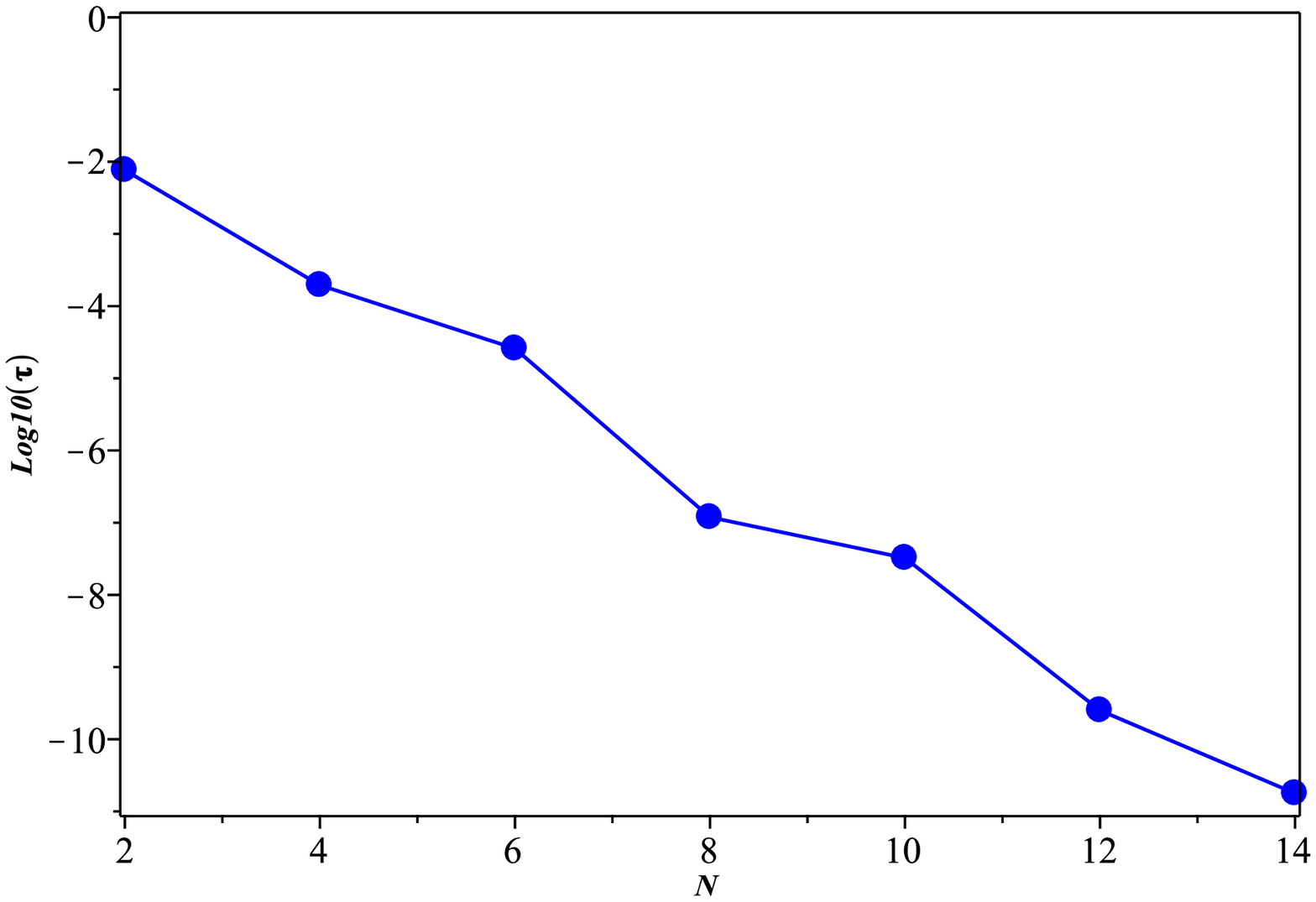}
\caption{\small{Behavior of the $\Vert \widehat{\tau}_{1,N}\Vert$ for the Example \ref{0111} for various $N$.}}
\label{fig6}}
\end{minipage}
\qquad \qquad \qquad
\begin{minipage}{4cm}
\includegraphics[width=5cm]{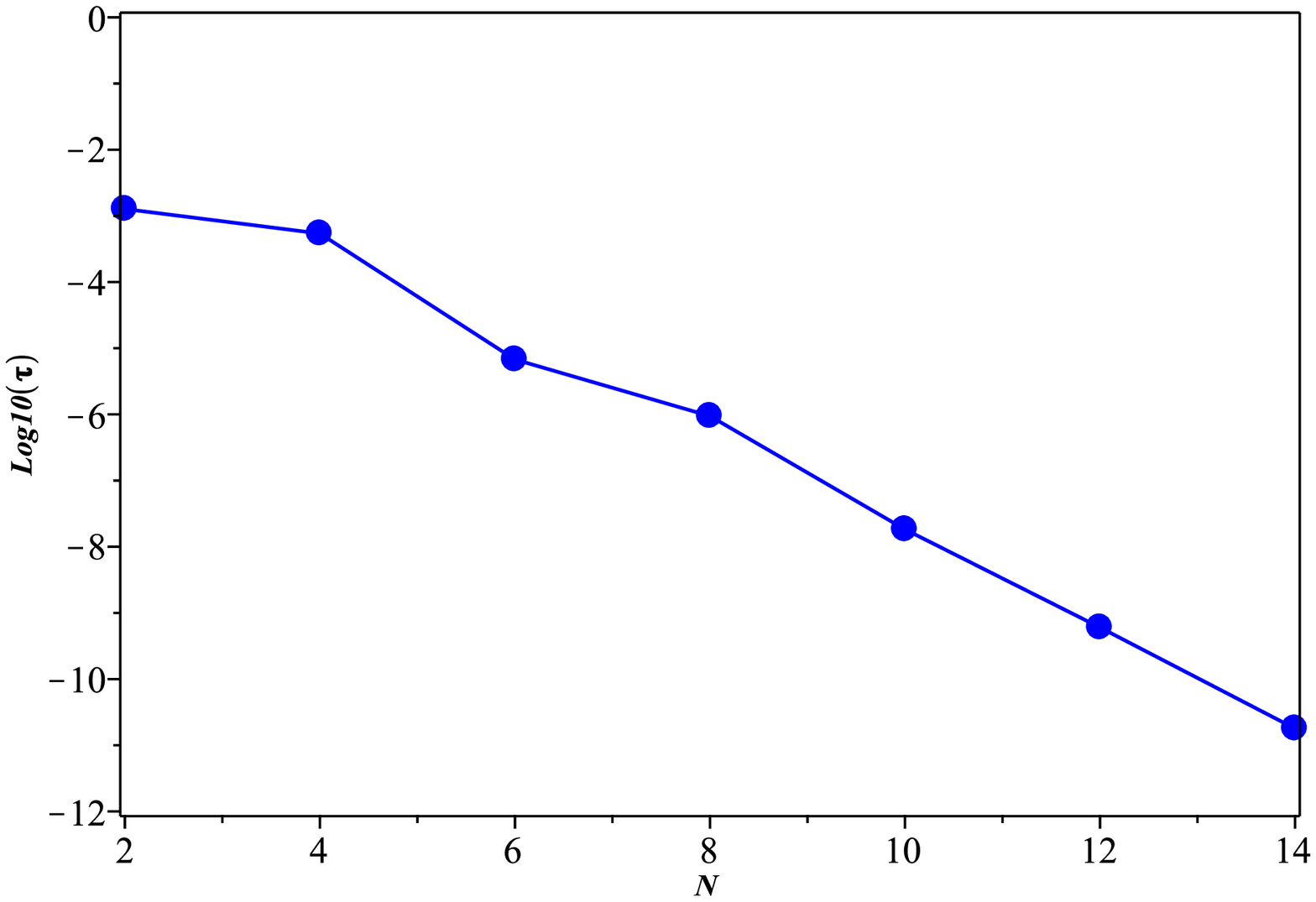}
\caption{\small{Behavior of the $\Vert \widehat{\tau}_{2,N}\Vert$ for the Example \ref{0111} for various $N$.}}
\label{fig7}
\end{minipage}
\end{figure}
\begin{example}\label{0111}
Consider the problem \eqref{1rev} with
\[
K(t,s)=\left[
  \begin{array}{cccc}
    -(t-s)^{-1/2}& && -(t-s)^{-1/2} \\
    \\
     (t-s)^{-1/2}&&& 0 \\
  \end{array}
\right],\ \ \
G(t)=\left[
  \begin{array}{c}
 2\sqrt{t}+\frac{\pi}{2}t\\\\
1-\sqrt{t}-e^{\pi t}erfc(\sqrt{\pi t}) \\
  \end{array}
\right],\ \ \
\]
\[
 Y(t)=\left[
 \begin{array}{c}
 1-e^{\pi t}erfc(\sqrt{\pi t})\\
 \sqrt{t}
 \end{array}
\right],
\]
in which $erfc(t):=1-\frac{2}{\sqrt{t}}\int_{0}^{t}e^{-t^{2}}dt$ is the complementary error function.
\end{example}
We have
\[
\left\{
  \begin{array}{ll}
      h_{11}=1,\ h_{12}=1, \ h_{1}=\max \lbrace 1,1 \rbrace=0,\\
   h_{21}=1,\ h_{22}=0,  \ h_{2}=\max \lbrace 1,0 \rbrace=1,\\
     \Delta_{1}=\Delta_{2}=0,\\
      \mathbf{h}=[h_{1},h_{2}]=[1,1].
  \end{array}
\right.
\]
The errors and behavior of Tau parameters for various values of  $N$ are reported in Table \ref{147} and Figs. \ref{fig4}-\ref{fig7}.
The subject of Remark \ref{Rem1} is  also conﬁrmed by these results.

\section{Conclusion}\label{sec5}
In this paper, we developed the recursive approach of the spectral Tau method (recursive Tau method) for solving a class of Abel-Volterra integral equations system based on a new set of fractional vector polynomials basis. These types of equations have a singularity at $t=0$, which indicates the non-smoothness of the solution. To recover the order of convergence of the recursive Tau method, we constructed a set of vector canonical polynomials of fractional order as basis functions by means of a recursive algorithm. This allows us to construct an accurate Tau-approximate solution. The numerical results confirmed the accuracy of the method as an exponential rate of convergence of it.  In the next research, we intend to implement our method to solve the system of Abel-Volterra integro-differential equations.

\end{document}